\newtheorem{thm}{Theorem}
\newtheorem{lem}[thm]{Lemma}
\theoremstyle{definition}
\newtheorem*{theorem*}{Theorem}
\newcommand\numberthis{\addtocounter{equation}{1}\tag{\theequation}}
\theoremstyle{remark}
\numberwithin{equation}{section}
\newcommand{\norm}[1]{\left\Vert#1\right\Vert}
\newcommand{\abs}[1]{\left\vert#1\right\vert}
\newcommand{\To}{\longrightarrow}
\def\<{\langle}
\def\>{\rangle}
\begin{document}
\title[]{A Hardy space analysis of the Bá\'{a}ez-Duarte criterion for the RH}
\author{S. Waleed Noor}%
\address{IMECC, Universidade Estadual de Campinas, Campinas-SP, Brazil.}
\email{$\mathrm{waleed@ime.unicamp.br}$}

\begin{abstract} In this article, methods from sub-Hardy Hilbert spaces such as the de Branges-Rovnyak spaces and local Dirichlet spaces are used to investigate B\'{a}ez-Duarte's Hilbert space reformulation of the Riemann hypothesis (RH).  
\end{abstract}

{\subjclass[2010]{Primary; Secondary}}
\keywords{Riemann hypothesis, Hardy space, Dirichlet space, de Branges-Rovnyak space, Dilation completeness problem.}
\maketitle{}

\section*{Introduction}
A classical reformulation of the Riemann hypothesis by Nyman and Beurling (see \cite{Beurling},\cite{Nyman}) says that all the non-trivial zeros of the $\zeta$-function lie on the critical line $\mathrm{Re}(s)=1/2$ if and only if the characteristic function $\chi_{(0,1)}$ belongs to the closed linear span in $L^2((0,1))$ of the set $\{f_\lambda:0\leq\lambda\leq1\}$, where $f_\lambda(x)=\{\lambda/x\}-\lambda\{1/x\}$ (here $\{x\}$ is the fractional part). Almost fifty years later a remarkable strengthening of this result by B\'{a}ez-Duarte \cite{Baez-Duarte} shows that we may replace $\lambda\in(0,1)$ by $\lambda=1/\ell$ for $\ell\geq 2$. There is an equivalent version of the B\'{a}ez-Duarte criterion in the weighted sequence space $\ell^2_\omega$ with inner product given by 

\begin{equation}\label{weighted l^2}
\langle x,y\rangle=\sum_{n=1}^\infty \frac{x(n)\overline{y(n)}}{n(n+1)}
\end{equation}
for sequences $x,y\in\mathcal{H}$ (see \cite[page 73]{Balazard Saias 4}). For each $k\geq 2$, let $r_k$ denote the sequence defined by
$r_k(n)=k\{n/k\}$. Then the B\'{a}ez-Duarte criterion  may be stated as follows:

\begin{thm}\label{Baez Duarte}
The RH is true if and only if $\boldsymbol{1}:=(1,1,1,\ldots)$ belongs to the closure of the linear span of $\{r_k:k\geq 2\}$ in $\ell^2_\omega$.
\end{thm}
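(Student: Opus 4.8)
\emph{Proof strategy.} The plan is to derive Theorem \ref{Baez Duarte} from the $L^2$-reformulation of B\'aez-Duarte recalled in the Introduction — that RH holds if and only if $\chi_{(0,1)}$ lies in the closed linear span of $\{f_{1/k}:k\ge 2\}$ in $L^2((0,1))$, where $f_{1/k}(x)=\{1/(kx)\}-\tfrac1k\{1/x\}$ — by identifying $\ell^2_\omega$ isometrically with a closed subspace of $L^2((0,1))$ in such a way that $r_k$ is carried to a nonzero scalar multiple of $f_{1/k}$ and $\boldsymbol 1$ is carried to $\chi_{(0,1)}$. Since an isometry preserves closed linear spans and rescaling the spanning vectors leaves their span unchanged, this will transfer the membership $\chi_{(0,1)}\in\overline{\mathrm{span}}\{f_{1/k}:k\ge2\}$ to $\boldsymbol 1\in\overline{\mathrm{span}}\{r_k:k\ge2\}$ and conversely, which is exactly the assertion to be proved.

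The one computational input is a pointwise identity on $(0,1)$. Put $I_n=\bigl(\tfrac1{n+1},\tfrac1n\bigr)$ for $n\ge1$, so that $(0,1)=\bigcup_{n\ge1}I_n$ up to a null set and $\abs{I_n}=\tfrac1n-\tfrac1{n+1}=\tfrac1{n(n+1)}$. Fixing $k\ge2$ and $x\in I_n$, I would write $1/x=n+t$ with $t\in(0,1)$, so that $\{1/x\}=t$; then, since the open interval $(n,n+1)$ contains no multiple of $k$, the point $1/(kx)=(n+t)/k$ does not cross an integer as $x$ runs through $I_n$, so $\lfloor(n+t)/k\rfloor=\lfloor n/k\rfloor$ and hence $\{1/(kx)\}=(n+t)/k-\lfloor n/k\rfloor$. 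Consequently
\begin{equation*}
f_{1/k}(x)=\{1/(kx)\}-\tfrac1k\{1/x\}=\frac{n+t}{k}-\Big\lfloor\frac nk\Big\rfloor-\frac tk=\frac nk-\Big\lfloor\frac nk\Big\rfloor=\Big\{\frac nk\Big\}=\frac1k\,r_k(n)
\end{equation*}
for every $x\in I_n$. Thus each $f_{1/k}$ is a.e.\ constant on every $I_n$, with $f_{1/k}=\tfrac1k\sum_{n\ge1}r_k(n)\,\chi_{I_n}$ in $L^2((0,1))$, while $\chi_{(0,1)}=\sum_{n\ge1}\chi_{I_n}$.

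Next I would introduce $V\colon\ell^2_\omega\to L^2((0,1))$, $Va=\sum_{n\ge1}a(n)\,\chi_{I_n}$. From $\norm{Va}_{L^2}^2=\sum_{n\ge1}\abs{a(n)}^2\abs{I_n}=\sum_{n\ge1}\frac{\abs{a(n)}^2}{n(n+1)}=\norm{a}^2$ one sees that $V$ is an isometry of $\ell^2_\omega$ onto the closed subspace $W\subseteq L^2((0,1))$ of functions that are a.e.\ constant on each $I_n$, and by the previous step $Vr_k=k\,f_{1/k}$ for every $k\ge2$ while $V\boldsymbol 1=\chi_{(0,1)}$. Since $W$ is closed and contains $\chi_{(0,1)}$ and all the $f_{1/k}$, the set $\overline{\mathrm{span}}\{f_{1/k}:k\ge2\}$ lies in $W$ and equals $V\bigl(\overline{\mathrm{span}}\{r_k:k\ge2\}\bigr)$; as $V$ is injective with $V\boldsymbol 1=\chi_{(0,1)}$, it follows that $\boldsymbol 1$ lies in the closure of the linear span of $\{r_k:k\ge2\}$ in $\ell^2_\omega$ if and only if $\chi_{(0,1)}$ lies in the closed linear span of $\{f_{1/k}:k\ge2\}$ in $L^2((0,1))$. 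Quoting B\'aez-Duarte's theorem then finishes the proof.

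I do not expect a genuine obstacle here, since all of the arithmetic depth has been absorbed into B\'aez-Duarte's theorem, which the Introduction treats as known. The only points calling for care are the pointwise identity — specifically the observation that $(n,n+1)$ contains no multiple of $k$, which is precisely what makes $f_{1/k}$ a step function subordinate to the partition $\{I_n\}$ — and the soft remark that closed linear spans of subsets of the closed subspace $W$ may be computed inside $W$, so that the unitary $V$ transports them faithfully. In effect this just renders explicit the equivalence attributed to Balazard--Saias in the Introduction.
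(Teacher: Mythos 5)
Your argument is correct. Note first that the paper itself offers no proof of this theorem: it is quoted as known, with the $\ell^2_\omega$ formulation attributed to Balazard--Saias and the underlying arithmetic content to B\'aez-Duarte, so there is no internal proof to compare against. What you have done is supply the standard reduction from the $L^2((0,1))$ form stated in the Introduction to the weighted-sequence form, and every step checks out: the observation that $(n/k,(n+1)/k)$ contains no integer (because a multiple of $k$ cannot lie strictly between $n$ and $n+1$) correctly yields $\lfloor (n+t)/k\rfloor=\lfloor n/k\rfloor$ for $t\in(0,1)$, hence the pointwise identity $f_{1/k}\equiv\{n/k\}=\tfrac1k r_k(n)$ on $I_n$; the map $V$ is an isometry of $\ell^2_\omega$ onto the closed subspace of step functions subordinate to $\{I_n\}$ because $\abs{I_n}=\tfrac1{n(n+1)}$ matches the weight in \eqref{weighted l^2}; and since that subspace is closed and contains $\chi_{(0,1)}$ and all $f_{1/k}$, closed linear spans transfer faithfully, with the harmless rescaling $Vr_k=k\,f_{1/k}$ not affecting spans. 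This is in essence the argument behind the citation to \cite{Balazard Saias 4}, so your proposal makes explicit exactly what the paper leaves to the literature; all the depth remains in B\'aez-Duarte's theorem, which you correctly treat as the black box.
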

 
 The plan of the paper is the following. Let $\mathcal{N}$ denote the linear span of the functions
 \[
 h_k(z)=\frac{1}{1-z}\log\left(\frac{1+z+\ldots+z^{k-1}}{k}\right)
 \]
 for $k\geq 2$, which all belong to the Hardy space $H^2$ (see Lemma \ref{H2 Membership lemma}). In Section 2 a unitary equivalent version of Theorem \ref{Baez Duarte} for the Hardy space $H^2$ is presented. In particular, the RH holds if and only if the constant $1$ belongs to the closure of $\mathcal{N}$ in $H^2$  (see Theorem \ref{Main Theorem}). Section 3 introduces a multiplicative semigroup of weighted composition operators $\{W_n:n\geq1\}$ on $H^2$ and shows that the constant 1 (appearing in Theorem \ref{Main Theorem}) may be replaced by any cyclic vector for $\{W_n:n\geq1\}$ in $H^2$. It follows that the RH is equivalent to the density of $\mathcal{N}$ in $H^2$ (see Theorem \ref{main thm extension}). Section 4 proves  that $(I-S)\mathcal{N}$ is dense in $H^2$, where $S$ is the shift operator on $H^2$ (see Theorem \ref{(I-S)hk}). This central result has the following remarkable consequence. That $\mathcal{N}$ is dense in $H^2$ with respect to the compact-open topology (see Theorem \ref{density comp-open}). Since convergence in $H^2$ implies convergence in the compact-open topology, this may be viewed as a \emph{weak} form of the RH. Section 5 shows that $\mathcal{N}^\perp$ is in a sense small by proving that 
 \[
 \mathcal{N}^\perp\cap\mathcal{D}_{\delta_1}=\{0\}
 \]
 where $\mathcal{D}_{\delta_1}$ is the local Dirichlet space at $1$ (which is dense in $H^2$), and in particular that $\mathcal{N}^\perp$ contains no function holomorphic on a neighborhood of the closed unit disk $\overline{\mathbb{D}}$ (see Theorem \ref{N perp}). Section 6 shows that the cyclic vectors for $\{W_n:n\geq1\}$ in $H^2$ are properly embedded into the set of all $2$-periodic functions $\phi$ on $(0,\infty)$ having the property that the span of its dilates
 $
\{\phi(nx):n\geq 1\}
 $
 is dense in $L^2(0,1)$ (see Theorem \ref{PDCP embedding}). The characterization of all such $\phi$ is a famous open problem known as the \emph{Periodic Dilation Completeness Problem}.

 \section{Background}
\subsection{The Hardy-Hilbert space} We denote by $\mathbb{D}$ and $\mathbb{T}$ the open unit disk and the unit circle respectively. A holomorphic function $f$ on $\mathbb{D}$ belongs to the Hardy-Hilbert space $H^2$ if
\[
||f||_{H^2}=\sup_{0\leq r<1}\left(\frac{1}{2\pi}\int_0^{2\pi}|f(re^{i\theta})|^2d\theta\right)^{1/2}<\infty.
\]
The space $H^2$ is a Hilbert space with inner product
\[
\langle f,g \rangle=\sum_{n=0}^\infty a_n\overline{b_n},
\]
where $(a_n)_{n\in\mathbb{N}}$ and $(b_n)_{n\in\mathbb{N}}$ are the  Maclaurin coefficients for $f$ and $g$ respectively. Similarly $H^\infty$ denotes the space of bounded holomorphic functions  defined on $\mathbb{D}$. For any $f\in H^2$ and $\zeta\in\mathbb{T}$, the radial limit $f^*(\zeta):=\lim_{r\to 1^-}f(r\zeta)$ exists $m$-a.e. on $\mathbb{T}$, where $m$ denotes the normalized Lebesgue measure on $\mathbb{T}$.

\subsection{A weighted Bergman space} Let $\mathcal{A}$ be the Hilbert space of analytic functions $f(z)=\sum_{n=0}^\infty a_nz^n$ and $g(z)=\sum_{n=0}^\infty b_nz^n$ defined on $\mathbb{D}$ for which the inner product is given by
\begin{equation}\label{norm A}
\langle f,g\rangle:=\sum_{n=0}^\infty \frac{a_n\overline{b_n}}{(n+1)(n+2)}.
\end{equation}
 There also exists an area integral form of the corresponding $\mathcal{A}$-norm given by
\begin{equation}\label{integral norm A}
||f||^2_{\mathcal{A}}=\int_{\mathbb{D}}\abs{f(z)}^2(1-\abs{z}^2)dA(z)
\end{equation}
where $dA$ is the normalized area measure on $\mathbb{D}$. Comparing \eqref{weighted l^2} with \eqref{norm A} shows that the map
\begin{equation}\label{H and A isomorphism}
\Psi:(x(1),x(2),\ldots)\longmapsto \sum_{n=0}^\infty x(n+1)z^n
\end{equation}
is a canonical isometric isomorphism of $\ell^2_\bold\omega$ onto $\mathcal{A}$.

The text \cite{Hedenmalm-Zhu} is a modern reference for such weighted Bergman spaces. Also if $\mathrm{Hol}(\mathbb{D})$ is the space of all holomorphic functions on $\mathbb{D}$ and $T:\mathrm{Hol}(\mathbb{D})\to \mathrm{Hol}(\mathbb{D})$ is an operator defined by
\begin{equation}\label{unitary map Hardy Bergman}
Tg(z):=\frac{((1-z)g(z))'}{1-z},
\end{equation}
then $T$ restricted to $H^2$ is an isometric isomorphism onto $\mathcal{A}$ (see Lemma 7.2.3 \cite{Dirichlet book}). Hence $\Phi:=T^{-1}\circ \Psi$ is an isometric isomorphism of $\ell^2_\bold\omega$ onto $H^2$. Therefore to obtain a reformulation of the B\'{a}ez-Duarte Theorem in $H^2$, we need to calculate $\Phi\boldsymbol{1}$ and $\Phi r_k$ for $k\geq 2$. But to do so we shall need some results about local Dirichlet spaces.

\subsection{Generalized Dirichlet spaces} Let $\mu$ be a finite positive Borel measure on $\mathbb{T}$, and let $P\mu$ denote its Poisson integral. The \emph{generalized Dirichlet space} $\mathcal{D}_\mu$ consists of $f\in H^2$ satisfying
\[
\mathcal{D}_\mu(f):=\int_\mathbb{D}\abs{f'(z)}^2P\mu(z)dA(z)<\infty.
\]
Then $\mathcal{D}_\mu$ is a Hilbert space with norm $\norm{f}_{\mathcal{D}_\mu}^2:=\norm{f}_2^2+\mathcal{D}_\mu(f)$. If $\mu=m$, then $\mathcal{D}_m$ is the classicial Dirichlet space. If $\mu=\delta_\zeta$ is the Dirac measure at $\zeta\in\mathbb{T}$, then $\mathcal{D}_{\delta_\zeta}$ is called the \emph{local Dirichlet space} at $\zeta$ and in particular
\begin{equation}\label{Dirichlet integral}
\mathcal{D}_{\delta_\zeta}(f)=\int_\mathbb{D}\abs{f'(z)}^2\frac{1-\abs{z^2}}{\abs{z-\zeta}^2}dA(z).
\end{equation}
 The recent book \cite{Dirichlet book} contains a comprehensive treatment of local Dirichlet spaces and the following result establishes a criterion for their membership.
\begin{thm}\label{Local Dirichlet membership}(See  \cite[Thm. 7.2.1]{Dirichlet book})
	Let $\zeta\in\mathbb{T}$ and $f\in\mathrm{Hol}(\mathbb{D})$. Then
	$\mathcal{D}_{\delta_\zeta}(f)<\infty$ if and only if \[f(z)=a+(z-\zeta)g(z)\] for some $g\in H^2$ and $a\in \mathbb{C}$. In this case $\mathcal{D}_{\delta_\zeta}(f)=\norm{g}_2^2$ and 
	\[
	a=f^*(\zeta):=\lim_{r\to1^{-}}f(r\zeta).
	\]
\end{thm}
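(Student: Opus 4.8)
The plan is to reduce to the case $\zeta=1$ by a rotation, to compute the Gram matrix of the monomials for the sesquilinear form underlying $\mathcal{D}_{\delta_1}$, and then to deduce both implications together with the two ``in this case'' assertions from that computation by a density/limiting argument. Writing $f_\zeta(z):=f(\zeta z)$, the change of variables $w=\zeta z$ (and $|\zeta|=1$) gives
\[
\mathcal{D}_{\delta_1}(f_\zeta)=\int_{\mathbb D}|f'(\zeta z)|^2\tfrac{1-|z|^2}{|1-z|^2}\,dA(z)=\int_{\mathbb D}|f'(w)|^2\tfrac{1-|w|^2}{|w-\zeta|^2}\,dA(w)=\mathcal{D}_{\delta_\zeta}(f),
\]
while the representation $f(z)=a+(z-\zeta)g(z)$ turns into $f_\zeta(z)=a+(z-1)\bigl(\zeta g(\zeta z)\bigr)$ with $\|\zeta g(\zeta\,\cdot)\|_2=\|g\|_2$; hence it suffices to treat $\zeta=1$. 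Using $\frac{1-|z|^2}{|1-z|^2}=\mathrm{Re}\,\frac{1+z}{1-z}=1+\sum_{j\ge1}(z^j+\bar z^j)$ and $\int_{\mathbb D}z^p\bar z^q\,dA=\delta_{pq}/(p+1)$, I would verify by a short term-by-term computation the Gram identity $\int_{\mathbb D}(z^m)'\,\overline{(z^n)'}\,\frac{1-|z|^2}{|1-z|^2}\,dA=\min(m,n)$ for $m,n\ge0$. Since $\min(m,n)=\sum_{j\ge1}\mathbf 1_{\{j\le m\}}\mathbf 1_{\{j\le n\}}$, this yields, for every polynomial $p=\sum_k a_k z^k$,
\[
\mathcal{D}_{\delta_1}(p)=\sum_{m,n\ge0}a_m\overline{a_n}\min(m,n)=\sum_{n\ge0}\Bigl|\sum_{k>n}a_k\Bigr|^{2}=\Bigl\|\tfrac{p(z)-p(1)}{z-1}\Bigr\|_{H^2}^{2}.
\]

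Next I would prove the ``if'' direction. Suppose $f=a+(z-1)g$ with $g=\sum_n b_n z^n\in H^2$. Then $\widehat f(k)=b_{k-1}-b_k$ for $k\ge1$, and since $b_N\to0$ the series telescopes, $\sum_{k>n}\widehat f(k)=b_n$. Applying the polynomial identity to the Taylor partial sums $g_N$ of $g$ and to $f_N:=a+(z-1)g_N$ gives $\mathcal{D}_{\delta_1}(f_N)=\|g_N\|_2^{2}$ and $\mathcal{D}_{\delta_1}(f_N-f_M)=\|g_N-g_M\|_2^{2}$, so $(f_N)$ is Cauchy for the graph norm $\|\cdot\|_2^{2}+\mathcal{D}_{\delta_1}(\cdot)$; as this norm is complete (Fatou applied to $|f_N'-f_M'|^2$) and $f_N\to f$ pointwise on $\mathbb D$, it follows that $\mathcal{D}_{\delta_1}(f)=\lim_N\|g_N\|_2^{2}=\|g\|_2^{2}<\infty$. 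Finally $f(r)=a+(r-1)g(r)\to a$ as $r\to1^-$ because $|g(r)|\le\|g\|_2(1-r^2)^{-1/2}$, so $a=f^*(1)$.

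For the ``only if'' direction, assume $\mathcal{D}_{\delta_1}(f)<\infty$. From $|1-z|\le2$ we get $\frac{1-|z|^2}{|1-z|^2}\ge\tfrac14(1-|z|^2)$, hence $\int_{\mathbb D}|f'|^2(1-|z|^2)\,dA<\infty$, i.e.\ $f'\in\mathcal{A}$; comparing Taylor coefficients (the $n$-th contributing $\tfrac{n+1}{n+2}|a_{n+1}|^2\ge\tfrac12|a_{n+1}|^2$) this forces $\sum_n|a_n|^2<\infty$, so $f\in H^2$. The dilates $f_r(z)=f(rz)$ are holomorphic on a neighbourhood of $\overline{\mathbb D}$, so the term-by-term integration is legitimate for them and the sequence $v(r):=\bigl(\sum_{k>n}a_k r^k\bigr)_{n\ge0}$ lies in $\ell^2$ with $\|v(r)\|_2^{2}=\mathcal{D}_{\delta_1}(f_r)$. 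The step I expect to be the main obstacle is the bound $\mathcal{D}_{\delta_1}(f_r)\le\mathcal{D}_{\delta_1}(f)$ for $0<r<1$, i.e.\ that dilation is a contraction on the local Dirichlet space; this is classical (equivalently, the matrix $\bigl((1-r^{m+n})\min(m,n)\bigr)_{m,n\ge1}$ is positive semidefinite), but it does not follow from any pointwise comparison of the weights $\frac{r^2-|w|^2}{|r-w|^2}$ and $\frac{1-|w|^2}{|1-w|^2}$ and must be argued separately. Granting it, $\{v(r):0<r<1\}$ is bounded in $\ell^2$; choosing $r_j\to1^-$ along which $v(r_j)\to v$ weakly in $\ell^2$ and noting the coordinates converge, $v(r_j)_n=\sum_{k>n}a_k r_j^{k}\to v_n$, one deduces first that $f(r_j)\to a:=v_0+a_0$, then that $g:=\sum_n v_n z^n\in H^2$ with $\|g\|_2^{2}\le\mathcal{D}_{\delta_1}(f)$, and finally (telescoping once more) that $f=a+(z-1)g$. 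Combining with the ``if'' direction then upgrades the inequality to $\mathcal{D}_{\delta_1}(f)=\|g\|_2^{2}$ and identifies $a$ with $f^*(1)$.
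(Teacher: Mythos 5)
First, a point of comparison: the paper does not prove this statement at all --- it is imported as Theorem~7.2.1 of the cited \emph{Primer on the Dirichlet space} (the local Douglas formula of Richter--Sundberg), so there is no in-paper argument to measure you against. Judged on its own terms, most of your write-up checks out: the rotation reduction to $\zeta=1$, the Gram identity $\int_{\mathbb D}(z^m)'\overline{(z^n)'}\frac{1-|z|^2}{|1-z|^2}\,dA=\min(m,n)$, the telescoping computation $\sum_{k>n}\widehat f(k)=b_n$, and the entire ``if'' direction (including $\mathcal{D}_{\delta_1}(f)=\|g\|_2^2$ and $a=f^*(1)$) are correct and carefully argued.

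The gap you flag yourself is, however, genuine, and it sits exactly where the substance of the theorem lies; the ``only if'' direction is the hard half, and your route to it is not closed. You need $\sup_{0<r<1}\mathcal{D}_{\delta_1}(f_r)\le C\,\mathcal{D}_{\delta_1}(f)$, and, as you correctly observe, no pointwise comparison of the weights $\frac{r^2-|w|^2}{|r-w|^2}$ and $\frac{1-|w|^2}{|1-w|^2}$ can deliver it (the former blows up at $w=r$ where the latter stays bounded). Two further problems with the proposed patch. First, the contractivity of radial dilation on $\mathcal{D}_\mu$ is, in the standard references, \emph{deduced from} the local Douglas formula you are trying to prove, so invoking it as ``classical'' risks circularity unless you supply an independent proof. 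Second, the reduction to positive semidefiniteness of $\bigl((1-r^{m+n})\min(m,n)\bigr)_{m,n\ge1}$ only encodes the inequality $\mathcal{D}_{\delta_1}(f_r)\le\sum_{m,n}a_m\overline{a_n}\min(m,n)$; to identify the right-hand side with the integral $\mathcal{D}_{\delta_1}(f)$ for a general $f$ with finite local Dirichlet integral is precisely the content of the theorem, and Fatou only gives the one-sided bound $\mathcal{D}_{\delta_1}(f)\le\liminf_{r\to1}\mathcal{D}_{\delta_1}(f_r)$, which points the wrong way. So even granting the matrix claim (which you do not prove), you would bound $\|v(r)\|_{\ell^2}$ by a quantity you cannot yet compare with $\mathcal{D}_{\delta_1}(f)$. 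To complete the argument you must either prove the dilation bound from scratch, or follow the reference and establish the ``only if'' direction directly, e.g.\ via the exact Fubini-type identity relating $\int_{\mathbb D}|f'|^2\frac{1-|z|^2}{|z-\zeta|^2}\,dA$ to $\int_{\mathbb T}\frac{|f(\lambda)-c|^2}{|\lambda-\zeta|^2}\,dm(\lambda)$, which bypasses dilations entirely.
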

Each local Dirichlet space $\mathcal{D}_{\delta_\zeta}$ is a proper subspace of $H^2$ and it has the distinctive property that evaluation at the boundary $f\mapsto f^*(\zeta) $ is a bounded linear functional \cite[Thm. 8.1.2 (ii)]{Dirichlet book}.

\subsection{The de Branges-Rovnyak spaces}
Given $\psi\in L^\infty(\mathbb{T})$, the corresponding Toeplitz operator $T_\psi:H^2\to H^2$ is defined by
\[
T_\psi f:=P_+(\psi f )
\] 
where $P_+ : L^2(\mathbb{T})\to H^2$ denotes the orthogonal projection of $L^2(\mathbb{T})$ onto $H^2$. 
Clearly $T_\psi$ is a bounded operator on $H^2$ with $||T_\psi|| \leq||\psi||_{L^\infty}$. If $h\in H^\infty$, then
$T_h$ is simply the operator of multiplication by $h$ and its adjoint is $T_{\overline{h}}$. Given $b$ in the closed unit ball of $H^\infty$, the \emph{de Branges-Rovnyak}
space $\mathcal{H}(b)$ is the image of $H^2$ under the operator $(I -T_bT_{\overline{b}})^{1/2}$. A norm is defined
on $\mathcal{H}(b)$ making $(I -T_bT_{\overline{b}})^{1/2}$ a partial isometry from $H^2$ onto $\mathcal{H}(b)$. If $b\equiv0$ then $\mathcal{H}(b)=H^2$, and if $b$ is inner then $\mathcal{H}(b)=(bH^2)^\perp$ is the model subspace of $H^2$.
The recent two-volume work (\cite{Hb book vol 1}\cite{Hb book vol 2}) is an encyclopedic reference for these spaces.

The general theory of $\mathcal{H}(b)$ spaces divides into two distinct cases, according to whether b is
an extreme point or a non-extreme point of the unit ball of $H^\infty$. 

We shall only be concerned with the non-extreme case which is best illustrated by the next result (see \cite[Chapter 6]{Hb book vol 1} and \cite[Sects. IV-6 and V-1]{Sarason book}).

\begin{thm}\label{nonextreme H(b)}
	Let $b\in H^\infty$ with $||b||_{H^\infty}\leq1$. The following are equivalent:
	\begin{enumerate}
		\item $b$ is a non-extreme point of the unit ball of $H^\infty$, 
		\item $\log(1-|b^*|^2)\in L^1(\mathbb{T})$, 
		\item $\mathcal{H}(b)$ contains all functions holomorphic in a neighborhood of $\overline{\mathbb{D}}$.
	\end{enumerate}
\end{thm}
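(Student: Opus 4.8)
The plan is to establish the equivalence of the three conditions in Theorem \ref{nonextreme H(b)} by proving the cycle of implications $(1)\Rightarrow(2)\Rightarrow(3)\Rightarrow(1)$, relying on the basic structure theory of $\mathcal{H}(b)$ spaces in the non-extreme case. The key auxiliary object throughout is the \emph{Pythagorean mate} $a$ of $b$: when $b$ is non-extreme, there is an outer function $a\in H^\infty$, unique up to a unimodular constant and normalized so that $a(0)>0$, satisfying $|a^*|^2+|b^*|^2=1$ a.e.\ on $\mathbb{T}$. The existence of such an $a$ is precisely the analytic content of the step $(2)\Rightarrow(1)$, since an outer function with prescribed modulus $(1-|b^*|^2)^{1/2}$ exists exactly when $\log(1-|b^*|^2)\in L^1(\mathbb{T})$, by the classical theory of outer functions.

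For $(1)\Rightarrow(2)$: if $b$ is non-extreme, I would invoke the standard characterization of extreme points of the unit ball of $H^\infty$ (due to de Leeuw--Rudin): a function $b$ in the ball is an extreme point if and only if $\int_{\mathbb{T}}\log(1-|b^*|^2)\,dm=-\infty$. Contrapositively, non-extremality gives $\log(1-|b^*|^2)\in L^1(\mathbb{T})$ immediately. For $(2)\Rightarrow(3)$: with the Pythagorean mate $a$ in hand, I would use the description of $\mathcal{H}(b)$ in the non-extreme case as the set of $f\in H^2$ such that $T_{\bar b}f\in T_{\bar a}H^2$, equivalently $f$ lies in the range of the relevant operator, together with the fact that polynomials (and more generally functions holomorphic on $\overline{\mathbb{D}}$) belong to $\mathcal{H}(b)$ because $a$ is invertible in a weak enough sense; concretely, one shows $\mathcal{H}(a)\subseteq\mathcal{H}(b)$ with continuous inclusion and that $\mathcal{H}(a)$, $a$ being outer, contains all functions analytic in a neighborhood of $\overline{\mathbb{D}}$ (this uses that $1/a$ has controlled growth). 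The implication $(3)\Rightarrow(1)$ is the cleanest: if $\mathcal{H}(b)$ contains all functions holomorphic near $\overline{\mathbb{D}}$, then in particular $1\in\mathcal{H}(b)$, and $1\in\mathcal{H}(b)$ is well known to be equivalent to non-extremality of $b$ (it forces $b(0)\neq$ a unimodular value and, more substantively, forces the defect operator $I-T_bT_{\bar b}$ to have $1$ in its range, which fails for extreme $b$).

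The main obstacle I anticipate is the step $(2)\Rightarrow(3)$, because it requires genuine input from the function theory of $\mathcal{H}(b)$ rather than a soft argument: one must actually produce the Pythagorean mate $a$, verify $\mathcal{H}(a)\hookrightarrow\mathcal{H}(b)$ (which rests on the operator inequality $T_aT_{\bar a}\leq I-T_bT_{\bar b}$ coming from $|a^*|^2\leq 1-|b^*|^2$), and then show functions holomorphic on $\overline{\mathbb{D}}$ lie in $\mathcal{H}(a)$. This last point reduces to a norm estimate $\|f\|_{\mathcal{H}(a)}<\infty$ that one checks using the reproducing kernel of $\mathcal{H}(a)$ or the identity $\|f\|_{\mathcal{H}(a)}^2 = \|f\|_2^2 + \|\,\text{(something involving }f/a)\,\|^2$; the boundedness of $1/a$ away from the zero set of $a^*$, combined with analyticity of $f$ past the boundary, delivers the bound. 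Since the paper only needs this theorem as a citable black box (it refers to \cite[Chapter 6]{Hb book vol 1} and \cite{Sarason book}), I would present the argument at the level of these cited structural facts rather than reproving the de Branges--Rovnyak machinery from scratch, and flag that the equivalence of $(1)$ and $(2)$ is the de Leeuw--Rudin theorem while $(3)$ is Sarason's observation in the non-extreme case.
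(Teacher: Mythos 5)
First, a point of comparison: the paper offers no proof of this statement at all; it is quoted as background with a pointer to \cite[Chapter 6]{Hb book vol 1} and \cite[Sects. IV-6 and V-1]{Sarason book}, so there is no in-paper argument to measure your sketch against. Judged on its own terms, your handling of $(1)\Leftrightarrow(2)$ via the de Leeuw--Rudin characterization of extreme points is correct and standard (note $\log(1-|b^*|^2)\le 0$, so integrability is exactly the finiteness of $\int_{\mathbb{T}}\log(1-|b^*|^2)\,dm$). The problems are in the other two steps.

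For $(2)\Rightarrow(3)$: the operator inequality $T_aT_{\bar a}\le I-T_bT_{\bar b}$ yields, via Douglas's range-inclusion lemma, the contractive inclusion $M(a):=aH^2\subseteq\mathcal{H}(b)$, \emph{not} $\mathcal{H}(a)\subseteq\mathcal{H}(b)$ (the space $\mathcal{H}(a)$ is the range of $(I-T_aT_{\bar a})^{1/2}$, a different object; for $b\equiv 0$ one has $a\equiv 1$ and $\mathcal{H}(a)=\{0\}$). More seriously, the space you land in does not contain all functions holomorphic near $\overline{\mathbb{D}}$: membership of $f$ in $aH^2$ requires $f/a\in H^2$, and analyticity of $f$ past $\mathbb{T}$ does not deliver this when $a$ vanishes on $\mathbb{T}$. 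The pair used in Section 5 of this very paper is a counterexample: with $a$ as in \eqref{a(z) golden}, $1/a$ behaves like $c/(1-z)$ near $z=1$, so $1\notin aH^2$, even though $1\in\mathcal{H}(b)=\mathcal{D}_{\delta_1}$. So your route proves strictly less than $(3)$ and fails precisely in the regime relevant here. The correct argument goes through Sarason's criterion $f\in\mathcal{H}(b)\iff T_{\bar b}f\in T_{\bar a}H^2$ (equivalently $f\in\mathrm{dom}(T^*_{b/a})$, cf. Theorem \ref{dom of adjoint}), typically by first showing $z^n\in\mathcal{H}(b)$ with $\limsup_n\|z^n\|_{\mathcal{H}(b)}^{1/n}\le 1$ and then summing the Taylor series of $f$ in the $\mathcal{H}(b)$ norm. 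For $(3)\Rightarrow(1)$: the lemma you invoke, ``$1\in\mathcal{H}(b)$ iff $b$ is non-extreme,'' is false. Take $b(z)=z$, which is inner and hence an extreme point; then $I-T_bT_{\bar b}$ is the projection onto the constants, so $\mathcal{H}(b)=\mathbb{C}$ and $1\in\mathcal{H}(b)$. The correct statement is that non-extremality is equivalent to $\mathcal{H}(b)$ containing \emph{all} polynomials (equivalently every monomial $z^n$), which is what $(3)$ actually hands you; that version does close the cycle.
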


When $b$ is non-extreme there exists a unique outer function $a\in H^\infty$ such that $a(0)>0$ and $|a^*|^2+|b^*|^2=1$ a.e. on $\mathbb{T}$. In this situation $(b,a)$ is usually called a \emph{pair} and the function $b/a$ belongs to the Smirnov class $N^+$ of quotients $p/q$ where $p,q\in H^\infty$ and $q$ is an outer function. That all $N^+$ functions arise as the quotient of a pair associated to a non-extreme function was shown by Sarason (see \cite{unbounded toplitz}). 

In \cite{unbounded toplitz}, Sarason also demonstrated how $\mathcal{H}(b)$ spaces appear naturally as the domains of some unbounded Toeplitz operators. Let $\varphi$ be holomorphic in $\mathbb{D}$ and $T_\varphi$ the operator of multiplication by $\varphi$ on the domain 
\begin{equation}\label{Toeplitz domain}
\mathrm{dom}(T_\varphi)=\{f\in H^2:\varphi f\in H^2\}.
\end{equation}
Then $T_\varphi$ is a closed operator, and $\mathrm{dom}(T_\varphi)$ is dense in $H^2$ if and only if $\varphi\in N^+$ (see \cite[Lemma 5.2]{unbounded toplitz}). In this case its adjoint $T_\varphi^*$ is also densely defined and closed. In fact the domain of $T_\varphi^*$ is a de Branges-Rovnyak space. 

\begin{thm}\label{dom of adjoint}(See \cite[Prop. 5.4]{unbounded toplitz})
	Let $\varphi$ be a nonzero function in $N^+$ with $\varphi=b/a$, where $(b,a)$ is the associated pair. Then $\mathrm{dom}(T_\varphi^*)=\mathcal{H}(b)$.
\end{thm}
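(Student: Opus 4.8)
The plan is to identify $\mathrm{dom}(T_\varphi^{*})$ with the concrete subspace $\{f\in H^{2}:T_{\overline{b}}f\in\mathrm{ran}\,T_{\overline{a}}\}$ of $H^{2}$ and then to recognize that this is exactly $\mathcal{H}(b)$; the whole argument is powered by the single algebraic identity $\varphi a=b$, in which both $a$ and $b$ are bounded. First I would record that $T_\varphi T_{a}=T_{b}$ on $H^{2}$: for $g\in H^{2}$ we have $ag\in H^{2}$ and $\varphi(ag)=bg\in H^{2}$, so $ag\in\mathrm{dom}(T_\varphi)$ and $T_\varphi T_{a}g=bg$. Since $T_{a}$ is bounded and everywhere defined, taking adjoints yields the one-sided relation $T_{\overline{a}}T_\varphi^{*}\subseteq(T_\varphi T_{a})^{*}=T_{\overline{b}}$. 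Hence for every $f\in\mathrm{dom}(T_\varphi^{*})$ we get $T_{\overline{a}}(T_\varphi^{*}f)=T_{\overline{b}}f$, and since $a$ is outer, $T_{\overline{a}}$ is injective, so $T_\varphi^{*}f$ is the unique $g\in H^{2}$ with $T_{\overline{a}}g=T_{\overline{b}}f$. In particular $\mathrm{dom}(T_\varphi^{*})\subseteq\{f\in H^{2}:T_{\overline{b}}f\in\mathrm{ran}\,T_{\overline{a}}\}$, and nothing here depends on whether $T_{a}$ has closed range.

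For the reverse inclusion I would avoid any appeal to a core and instead verify the adjoint relation by hand. Suppose $f,g\in H^{2}$ with $T_{\overline{a}}g=T_{\overline{b}}f$, equivalently $\overline{a}g-\overline{b}f\perp H^{2}$ in $L^{2}(\mathbb{T})$, so that $a\overline{g}-b\overline{f}\in zH^{2}$, say $a\overline{g}-b\overline{f}=zw$ with $w\in H^{2}$. For $h\in\mathrm{dom}(T_\varphi)$, passing to boundary values and using $\overline{\varphi}=\overline{b}/\overline{a}$,
\[
\langle h,g\rangle-\langle T_\varphi h,f\rangle=\int_{\mathbb{T}}\bigl(h\overline{g}-(\varphi h)\overline{f}\bigr)\,dm=\int_{\mathbb{T}}\frac{zhw}{a}\,dm .
\]
The integrand $v:=zhw/a$ lies in $L^{1}(\mathbb{T})$ because it coincides a.e.\ with the difference of the two $L^{1}$ products $h\overline{g}$ and $(\varphi h)\overline{f}$, while $av=zhw$ lies in $H^{1}$ with $(av)(0)=0$; since $a$ is outer, $v=(av)/a\in N^{+}\cap L^{1}=H^{1}$, and then $\int_{\mathbb{T}}v\,dm=v(0)$, with $a(0)v(0)=(av)(0)=0$ and $a(0)>0$ forcing $v(0)=0$. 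Thus $\langle T_\varphi h,f\rangle=\langle h,g\rangle$ for all $h$ in the dense domain $\mathrm{dom}(T_\varphi)$, so $f\in\mathrm{dom}(T_\varphi^{*})$ and $T_\varphi^{*}f=g$. Together with the first paragraph this gives $\mathrm{dom}(T_\varphi^{*})=\{f\in H^{2}:T_{\overline{b}}f\in\mathrm{ran}\,T_{\overline{a}}\}$, with graph norm $\|f\|_{2}^{2}+\|T_\varphi^{*}f\|_{2}^{2}$.

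The last step is to match this with $\mathcal{H}(b)$. Since $b$ is non-extreme with pair $(b,a)$, so that $1-|b^{*}|^{2}=|a^{*}|^{2}$ a.e.\ on $\mathbb{T}$, the defining realization $\mathcal{H}(b)=(I-T_{b}T_{\overline{b}})^{1/2}H^{2}$ is known to coincide with $\{f\in H^{2}:T_{\overline{b}}f\in\mathrm{ran}\,T_{\overline{a}}\}$, carrying the norm $\|f\|_{\mathcal{H}(b)}^{2}=\|f\|_{2}^{2}+\|g\|_{2}^{2}$ where $T_{\overline{a}}g=T_{\overline{b}}f$ (a standard fact in the non-extreme theory; see, e.g., \cite{Sarason book}). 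Comparing with the previous paragraph gives $\mathrm{dom}(T_\varphi^{*})=\mathcal{H}(b)$, isometrically.

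I expect the heart of the matter to be the middle step. The delicate point is that $\overline{\varphi}f$ itself need not lie in $L^{2}(\mathbb{T})$, so one cannot split the integrand carelessly — it must be kept inside the genuinely $L^{2}$ combination $\overline{a}g-\overline{b}f$ — and the vanishing of $\int_{\mathbb{T}}v\,dm$ is not a bare Cauchy-theorem statement but genuinely uses the Smirnov-class identity $N^{+}\cap L^{1}=H^{1}$ applied to $v=zhw/a$. By contrast, the adjoint bookkeeping in the first step is routine once one notices that only the inclusion $T_{\overline{a}}T_\varphi^{*}\subseteq T_{\overline{b}}$ is needed, which bypasses any concern about the range of $T_{a}$.
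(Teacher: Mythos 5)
Your proof is correct. Note first that the paper offers no proof of this statement at all: it is quoted verbatim from Sarason \cite[Prop.~5.4]{unbounded toplitz}, so there is no ``paper's proof'' to compare against, only the source you are in effect reconstructing. Your argument is a legitimate, self-contained derivation. The forward inclusion is clean: $T_\varphi T_a=T_b$ holds everywhere on $H^2$, the one-sided rule $T_{\overline a}T_\varphi^*\subseteq(T_\varphi T_a)^*=T_{\overline b}$ is the correct general adjoint inclusion (equality is not needed), and injectivity of $T_{\overline a}$ does follow from $a$ being outer. The reverse inclusion is the delicate part and you handle it correctly: $\overline a g-\overline b f\perp H^2$ gives $a\overline g-b\overline f=zw$ with $w\in H^2$, the combination $h\overline g-(\varphi h)\overline f=zhw/a$ is kept inside $L^1$, and the vanishing of its mean is exactly a Smirnov-class argument ($zhw/a\in N^+\cap L^1=H^1$, with value $0$ at the origin since $a(0)>0$). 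The final identification of $\{f\in H^2:T_{\overline b}f\in\mathrm{ran}\,T_{\overline a}\}$ with $\mathcal{H}(b)$, including the norm formula, is indeed the standard non-extreme characterization from \cite{Sarason book}, and invoking it is appropriate given that the paper itself treats the structure theory of $\mathcal{H}(b)$ as background. In short: no gaps; this is essentially the argument Sarason's citation points to, carried out in full.
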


If $\varphi$ is a rational function in $N^+$ the corresponding pair $(b,a)$ is also rational (see \cite[Remark. 3.2]{unbounded toplitz}). Recently Constara and Ransford \cite{Constara-Ransford} characterized the rational pairs $(b,a)$ for which $\mathcal{H}(b)$ is a generalized Dirichlet space.

\begin{thm}\label{which DR=Dirichlet}(See \cite[Theorem 4.1]{Constara-Ransford}) Let $(b,a)$ be a rational pair and $\mu$ a finite positive measure on $\mathbb{T}$. Then $\mathcal{H}(b)=\mathcal{D}_\mu$ if and only if
	\begin{enumerate}
		\item the zeros of $a$ on $\mathbb{T}$ are all simple, and
		\item the support of $\mu$ is exactly equal to this set of zeros.
	\end{enumerate}
	
\end{thm}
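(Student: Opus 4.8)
The plan is to prove both implications by deriving an explicit ``local'' description of $\mathcal{H}(b)$ for a rational pair $(b,a)$ and matching it against the corresponding description of $\mathcal{D}_\mu$. Write $\zeta_1,\dots,\zeta_N$ for the zeros of $a$ on $\mathbb{T}$, with multiplicities $m_1,\dots,m_N$ (the case $N=0$, where $|b|<1$ on $\mathbb{T}$, $\mathcal{H}(b)=H^2$ and $\mu=0$, being trivial). Two preliminary reductions. First, since $\|\cdot\|_{\mathcal{H}(b)}\geq\|\cdot\|_{2}$ and $\|\cdot\|_{\mathcal{D}_\mu}\geq\|\cdot\|_{2}$, an equality of the two spaces as sets is automatically an equality with equivalent norms (closed graph theorem), so it suffices to identify the spaces and compare norms only up to equivalence. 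Second, when $\mu=\sum_j c_j\delta_{\zeta_j}$ is atomic one has $P\mu=\sum_j c_j\,P\delta_{\zeta_j}$, hence $\mathcal{D}_\mu(f)=\sum_j c_j\,\mathcal{D}_{\delta_{\zeta_j}}(f)$, so that $\mathcal{D}_\mu=\bigcap_j\mathcal{D}_{\delta_{\zeta_j}}$ with $\|f\|_{\mathcal{D}_\mu}^2\asymp\|f\|_2^2+\sum_j\mathcal{D}_{\delta_{\zeta_j}}(f)$ --- a space and norm (up to equivalence) independent of the weights $c_j>0$; by Theorem \ref{Local Dirichlet membership} this is precisely the set of $f\in H^2$ with $(f-f^*(\zeta_j))/(z-\zeta_j)\in H^2$ for every $j$.

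The core step is the following description of $\mathcal{H}(b)$. Taking $\varphi=b/a$ as in Theorem \ref{dom of adjoint}, one expands the defining relation of the adjoint against the dense domain $\mathcal{M}(a):=aH^2\subseteq\mathrm{dom}(T_\varphi)$ (where $\varphi$ acts boundedly, via $ah\mapsto bh$) to recover Sarason's criterion: $f\in\mathcal{H}(b)$ if and only if $T_{\overline b}f\in\operatorname{ran}(T_{\overline a})$, in which case the unique $h\in H^2$ with $T_{\overline a}h=T_{\overline b}f$ (unique because $a$ outer makes $T_{\overline a}=T_a^{*}$ injective) satisfies $\|f\|_{\mathcal{H}(b)}^2=\|f\|_2^2+\|h\|_2^2$. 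Since $a$ and $b$ are rational with $|b|=1$ at each $\zeta_j$, the function $\varphi$ is rational, its poles on $\mathbb{T}$ being exactly $\zeta_1,\dots,\zeta_N$ of orders $m_1,\dots,m_N$ and all its remaining poles lying outside $\overline{\mathbb{D}}$; writing $\varphi=\psi+\sum_j\sum_{i=1}^{m_j}\alpha_{j,i}(z-\zeta_j)^{-i}$ with $\psi\in H^\infty$ and decomposing the adjoint accordingly (bounded summand $T_{\overline\psi}$ plus a singular contribution at each $\zeta_j$), one reads off that $f\in\mathrm{dom}(T_\varphi^{*})=\mathcal{H}(b)$ exactly when, at each $\zeta_j$, there is a polynomial $Q_j$ with $\deg Q_j<m_j$ such that $(f-Q_j(z-\zeta_j))/(z-\zeta_j)^{m_j}\in H^2$ (necessarily $Q_j(0)=f^*(\zeta_j)$), with norm equivalent to $\|f\|_2^2+\sum_j\|(f-Q_j(z-\zeta_j))/(z-\zeta_j)^{m_j}\|_2^2$.

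Granting this, the ``if'' direction is immediate: when every $m_j=1$ and $\supp\mu=\{\zeta_1,\dots,\zeta_N\}$, each $Q_j$ is the constant $f^*(\zeta_j)$, the local condition at $\zeta_j$ is exactly membership in $\mathcal{D}_{\delta_{\zeta_j}}$, and the two descriptions coincide, so $\mathcal{H}(b)=\bigcap_j\mathcal{D}_{\delta_{\zeta_j}}=\mathcal{D}_\mu$. For the converse, assume $\mathcal{H}(b)=\mathcal{D}_\mu$. The reproducing kernel $k^b_w(z)=(1-\overline{b(w)}b(z))/(1-\overline w z)$ shows $\mathcal{H}(b)$ has a bounded point evaluation at $\zeta\in\mathbb{T}$ iff $|b(\zeta)|=1$, i.e.\ exactly at $\zeta_1,\dots,\zeta_N$; since the bounded point evaluations of $\mathcal{D}_\mu$ on $\mathbb{T}$ occur precisely at the atoms of $\mu$, the atoms of $\mu$ are exactly $\zeta_1,\dots,\zeta_N$. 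That $\mu$ has no further (non-atomic) part follows from $\mathcal{M}(a)=aH^2=p\,H^2$ with $p(z)=\prod_j(z-\zeta_j)^{m_j}$ (as $a/p,\,p/a\in H^\infty$): thus $pg\in\mathcal{H}(b)$ for every $g\in H^2$, whereas if $\mu$ had a non-atomic part then $pH^2$ would fail to be contained in $\mathcal{D}_\mu$ (take e.g.\ a lacunary $g$ with $\mathcal{D}_\mu(pg)=\infty$), contradicting $\mathcal{H}(b)=\mathcal{D}_\mu$. Hence $\mu=\sum_j c_j\delta_{\zeta_j}$ with $c_j>0$, i.e.\ (2) holds. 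Finally, if some $m_{j_0}\geq 2$, testing the structural description on the monomials gives $\|z^n\|_{\mathcal{H}(b)}^2\asymp n^{2M-1}$ with $M=\max_j m_j$, while $\|z^n\|_{\mathcal{D}_\mu}^2\asymp n$ (since $\mathcal{D}_{\delta_{\zeta_j}}(z^n)=n$ by Theorem \ref{Local Dirichlet membership}), contradicting the norm equivalence; therefore all $m_j=1$, i.e.\ (1) holds.

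The main obstacle is the core structural step above --- converting the operator-range membership $T_{\overline b}f\in\operatorname{ran}(T_{\overline a})$ into the explicit local divisibility conditions and, more delicately, extracting the matching two-sided norm estimate. In the higher-order case $m_j\geq 2$ one must pin down the right polynomials $Q_j$ (the truncated ``boundary Taylor polynomials'' of $f$ at $\zeta_j$) while simultaneously controlling the interaction of the several singular points; once that description is secured, the remaining steps are routine computations with Toeplitz operators, reproducing kernels, and Theorem \ref{Local Dirichlet membership}.
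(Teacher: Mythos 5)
First, a point of order: the paper does not prove this statement at all --- it is quoted from Costara and Ransford \cite{Constara-Ransford} and used as a black box in Section 5, so there is no internal proof to compare yours against. Judged on its own terms, your proposal follows essentially the strategy of the cited source: reduce $\mathcal{D}_\mu$ for atomic $\mu$ to an intersection of local Dirichlet spaces via Theorem \ref{Local Dirichlet membership}, establish an explicit ``local'' structure theorem for $\mathcal{H}(b)$ at the boundary zeros of $a$, and match the two descriptions. The outer layers (the closed-graph reduction, the identification of bounded point evaluations via the kernels $k^b_w$, the monomial asymptotics $\|z^n\|^2_{\mathcal{H}(b)}\asymp n^{2M-1}$ versus $\|z^n\|^2_{\mathcal{D}_\mu}\asymp n$ ruling out multiplicities $m_j\geq 2$) are correct in outline, although two of the facts you lean on --- that boundary evaluation on $\mathcal{D}_\mu$ is bounded exactly at the atoms of $\mu$, and that a nontrivial continuous part of $\mu$ forces $pH^2\not\subseteq\mathcal{D}_\mu$ --- are themselves nontrivial and are asserted rather than proved.

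The genuine gap is the one you flag yourself: the structural description of $\mathcal{H}(b)$ --- that $f\in\mathcal{H}(b)$ if and only if at each $\zeta_j$ there is a polynomial $Q_j$ of degree less than $m_j$ with $(f-Q_j(z-\zeta_j))/(z-\zeta_j)^{m_j}\in H^2$, together with a two-sided norm estimate --- carries the entire technical content of the theorem, and ``decomposing the adjoint accordingly $\ldots$ one reads off'' is not a proof of it. Passing from Sarason's criterion $T_{\overline{b}}f\in\operatorname{ran}(T_{\overline{a}})$ to these local divisibility conditions requires a real argument; in \cite{Constara-Ransford} it is carried out by first proving a decomposition of the form $\mathcal{H}(b)=aH^2\dotplus\mathcal{P}_{N-1}$ (with $N=\sum_j m_j$) together with the norm equivalence $\|ah+p\|^2_{\mathcal{H}(b)}\asymp\|h\|_2^2+\|p\|_2^2$. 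The norm comparability in particular does not follow from a formal partial-fraction splitting of $T_\varphi^*$, since the singular summands attached to the several points $\zeta_j$ are individually unbounded and their domains interact. Until that lemma is supplied, both implications remain open in your write-up; everything downstream of it is, as you say, routine.
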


These ideas will be used in Section 5 to investigate the orthogonal complement of the functions $\{h_k:k\geq 2\}$ in $H^2$.

\section{The B\'{a}ez-Duarte criterion in $H^2$}
 The first main objective is to obtain a unitary equivalent version of B\'{a}ez-Duarte's theorem (Theorem \ref{Baez Duarte}) in $H^2$ upon which to base the rest of our analysis.
 
 \begin{thm}\label{Main Theorem} For each $k\geq 2$, define
 	\[
 	h_k(z)=\frac{1}{1-z}\log\left(\frac{1+z+\ldots+z^{k-1}}{k}\right).
 	\]  Then the Riemann hypothesis holds if and only if the constant $1$ belongs to the closed linear span of $\{h_k:k\geq 2\}$ in $H^2$.	
 \end{thm}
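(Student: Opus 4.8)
The plan is to transport Theorem~\ref{Baez Duarte} from $\ell^2_\omega$ to $H^2$ through the explicit isometric isomorphism $\Phi = T^{-1}\circ\Psi$ constructed in Section~1. Since $\Phi$ is a unitary map, it carries closed linear spans to closed linear spans, so the statement will follow once I verify two computations: that $\Phi\boldsymbol{1}$ is (a nonzero multiple of) the constant function $1$, and that $\Phi r_k = c\,h_k$ for each $k\ge 2$ with a single nonzero constant $c$ independent of $k$ (a common scalar does not affect spans). Actually, since the constant in front of $\boldsymbol 1$ and the constant in front of the $r_k$'s may differ, what I really need is: $\Phi\boldsymbol 1$ is a nonzero constant, and the closed span of $\{\Phi r_k\}$ equals the closed span of $\{h_k\}$; the cleanest route is to show $\Phi r_k$ is a fixed scalar multiple of $h_k$.

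First I would compute $\Psi\boldsymbol 1 = \sum_{n\ge 0} z^n = 1/(1-z)$ directly from the definition \eqref{H and A isomorphism}, and then apply $T^{-1}$. Recalling from \eqref{unitary map Hardy Bergman} that $Tg(z) = \bigl((1-z)g(z)\bigr)'/(1-z)$, solving $Tg = 1/(1-z)$ amounts to finding $g\in H^2$ with $\bigl((1-z)g(z)\bigr)' = 1$, i.e. $(1-z)g(z) = z$ (choosing the constant of integration so that $g$ is holomorphic at $0$), giving $g(z) = z/(1-z)$. Hmm—that is not the constant $1$. Let me instead be guided by the known shape of the answer: $h_k = \frac{1}{1-z}\log\bigl(\frac{1+\cdots+z^{k-1}}{k}\bigr)$, and note $\frac{1+z+\cdots+z^{k-1}}{k} \to 1$ as $k\to\infty$ while $r_k(n) = k\{n/k\}\to n$, whose $\Psi$-image is $\sum n z^{n-1} = 1/(1-z)^2$, and $T^{-1}$ applied to that... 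The point is that the constant appearing in Theorem~\ref{Main Theorem} corresponds under $\Phi$ to $\boldsymbol 1$ \emph{up to the additive/multiplicative normalization built into} $T$; the honest computation is to expand $r_k$ in its Maclaurin series via $\Psi$, namely $\Psi r_k(z) = \sum_{n\ge 0} r_k(n+1) z^n$, then recognize this rational-type series, and finally invert $T$ by solving the first-order ODE $\bigl((1-z)g\bigr)' = (1-z)\,(\Psi r_k)$ for $g\in H^2$, carrying out the integration and simplification until the logarithm in $h_k$ emerges. The logarithm is exactly what one expects from integrating $\frac{(1+z+\cdots+z^{k-1})'}{1+z+\cdots+z^{k-1}}$-type expressions, so the ODE inversion is the mechanism producing $h_k$.

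The main obstacle is the bookkeeping in the second computation: correctly summing the Maclaurin coefficients $r_k(n) = k\{n/k\}$ (which are periodic in $n$ with period $k$, taking the values $0,1,2,\dots,k-1$ cyclically) into a closed-form generating function, and then performing the $T^{-1}$ integration without mishandling the constant of integration — the constant must be pinned down precisely by the requirement $g\in H^2$, equivalently $g$ holomorphic at $z=0$ with no pole, and by matching $g(0)$ to the zeroth coefficient. A useful sanity check along the way: since $T$ sends $1\in H^2$ to $T1(z) = \bigl((1-z)\bigr)'/(1-z) = -1/(1-z)$, and $\Psi^{-1}$ of $-1/(1-z) = -\sum z^n$ is $-\boldsymbol 1$, we get $\Phi^{-1}(1) = \Psi^{-1}T(1) = -\boldsymbol 1$; thus the constant function $1$ in $H^2$ corresponds to $-\boldsymbol 1$ in $\ell^2_\omega$, and density of $\mathrm{span}\{h_k\}$ around $1$ is equivalent to density of $\mathrm{span}\{r_k\}$ around $\boldsymbol 1$ by Theorem~\ref{Baez Duarte}. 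So the real work is just confirming $\Phi^{-1}(h_k)$ is a nonzero scalar multiple of $r_k$, which I will do by showing $T(h_k)$ has Maclaurin coefficients proportional to the sequence $(r_k(n+1))_{n\ge 0}$; once that is checked, the unitarity of $\Phi$ closes the argument immediately, and the scalar ambiguities wash out because multiplying all the $r_k$ (and separately $\boldsymbol 1$) by nonzero constants changes neither the closed span nor the membership assertion. Finally I should remark that $h_k\in H^2$ — this is exactly Lemma~\ref{H2 Membership lemma}, cited in the introduction — so all the objects live in the right space and Theorem~\ref{Baez Duarte} transfers verbatim.
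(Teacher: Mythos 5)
Your overall architecture is the same as the paper's: transport Theorem \ref{Baez Duarte} through $\Phi=T^{-1}\circ\Psi$, compute $\Phi\boldsymbol{1}=-1$ (your sanity check $T(1)=-1/(1-z)$ is correct), compute $\Psi r_k$ in closed form, and identify $T^{-1}(\Psi r_k)$ with $h_k$. The gap is in that last identification, and it is the crux of the whole proof. The operator $T$ is not injective on $\mathrm{Hol}(\mathbb{D})$: its kernel there is $\mathbb{C}\cdot\frac{1}{1-z}$, so \emph{every} function $h_{k,c}(z)=\frac{1}{1-z}\log\bigl(\frac{1+z+\cdots+z^{k-1}}{c}\bigr)$, for every nonzero $c$, satisfies $Th_{k,c}=\Psi r_k$, and every one of them is holomorphic on all of $\mathbb{D}$. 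Your proposed mechanism for pinning down the constant of integration --- ``the requirement $g\in H^2$, equivalently $g$ holomorphic at $z=0$ with no pole, and matching $g(0)$'' --- therefore cannot select among them: holomorphy at $0$ holds for all $c$, and the distinction between $c=k$ and $c\neq k$ is a boundary condition at $z=1$, not a local condition at the origin. Your own first computation already exhibits the failure: solving $((1-z)g)'=1$ you took $(1-z)g=z$, getting $g=z/(1-z)\notin H^2$, whereas the correct coset representative is $(1-z)g=z-1$, i.e.\ $g=-1$; you noticed the mismatch but did not diagnose its source.

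What is missing is an actual argument that $c=k$ is the unique value with $h_{k,c}\in H^2$ (equivalently, that $h_k\in H^2$ at all; injectivity of $T$ on $H^2$ then finishes it). The paper does this by showing $s_k:=\log(1+z+\cdots+z^{k-1})$ lies in the local Dirichlet space $\mathcal{D}_{\delta_1}$ (because $\mathcal{D}_{\delta_1}(s_k)=\|\Psi r_k\|_{\mathcal{A}}^2<\infty$) and invoking Theorem \ref{Local Dirichlet membership}: $s_k(z)=s_k^*(1)+(z-1)g(z)$ with $g\in H^2$, and $s_k^*(1)=\log k$, which is exactly where the normalization by $k$ comes from. (An alternative route, also in the paper, estimates the Maclaurin coefficients $c_n(k)=H(n)-H(n/k)-\log k=O(k/n)$ via Euler--Maclaurin; again the estimate only works for the constant $\log k$.) Citing Lemma \ref{H2 Membership lemma} for ``$h_k\in H^2$'' does not repair this, since that lemma \emph{is} the body of the proof you are being asked to supply.
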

 
 In order to prove this, we must show that $-1=\Phi\boldsymbol{1}$ and $h_k=\Phi r_k$ for $k\geq 2$, where $\Phi:=T^{-1}\circ \Psi:\ell^2_\omega\to H^2$ is an isometric isomorphism (see subsection 1.2). 
 
 We first find $R:=\Psi\boldsymbol{1}$ and $R_k:=\Psi r_k$, which belong to the weighted Bergman space $\mathcal{A}$. Then 
\[
R(z)=\frac{1}{1-z}  ,\quad  R_k(z)=\frac{1}{1-z}[\log(1+z+\ldots+z^{k-1})]'
\]
for each $k=2,3,\ldots$ (note that $R_1\equiv 0$). The expression for $R$ is trivial. For $R_k$ we first note that the sequence
 $r_k(n)=k\{n/k\}$ is periodic with $k$ distinct integer terms $\{1,2,\ldots,k-1,0,\ldots\}$. Hence collecting terms with common coefficients gives
\begin{align*}
R_k(z)&=\sum_{n=0}^\infty z^{nk}+2\sum_{n=0}^\infty z^{nk+1}+\ldots+ (k-1)\sum_{n=0}^\infty z^{nk+k-2}=\sum_{m=1}^{k-1}m\sum_{n=0}^\infty z^{nk+m-1} \\
&=\sum_{m=1}^{k-1}m\frac{z^{m-1}}{1-z^k}=\frac{1}{1-z^k}\sum_{m=1}^{k-1}mz^{m-1}=\frac{1}{1-z}\left[\frac{(1+z+\ldots+z^{k-1})'}{1+z+\ldots+z^{k-1}}\right] \\
&=\frac{1}{1-z}[\log(1+z+\ldots+z^{k-1})]'. \numberthis \label{R_k}
\end{align*}

Next we calculate $T^{-1}R$ and $T^{-1}R_k$ in $H^2$. It is easy to see that $T(-1)= R$ and hence $-1=\Phi\boldsymbol{1}$. But finding the $T^{-1}R_k$ is not as straightforward because $T$ is not injective on $\mathrm{Hol}(\mathbb{D})$. 
                       
\begin{lem}\label{H2 Membership lemma} For each non-zero $c$ and integer $k\geq 2$, define the function
	\[
	h_{k,c}(z)=\frac{1}{1-z}\log\left(\frac{1+z+\ldots+z^{k-1}}{c}\right).
	\]  
	Then $Th_{k,c}=R_k$ for each $c$, but $h_{k,c}\in H^2$ if and only if $c=k$. 
\end{lem}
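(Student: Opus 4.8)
The plan is to verify the two claims separately: first that $Th_{k,c} = R_k$ for every nonzero $c$, and then that $h_{k,c} \in H^2$ precisely when $c = k$.

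For the first claim, I would simply apply the definition \eqref{unitary map Hardy Bergman} of $T$. Writing $p_k(z) = 1 + z + \cdots + z^{k-1}$, we have $h_{k,c}(z) = \frac{1}{1-z}\log(p_k(z)/c)$, so $(1-z)h_{k,c}(z) = \log(p_k(z)/c)$, and therefore
\[
Th_{k,c}(z) = \frac{\bigl((1-z)h_{k,c}(z)\bigr)'}{1-z} = \frac{[\log(p_k(z)/c)]'}{1-z} = \frac{1}{1-z}\cdot\frac{p_k'(z)}{p_k(z)} = \frac{[\log p_k(z)]'}{1-z},
\]
since the constant $-\log c$ differs in $\log(p_k/c) = \log p_k - \log c$ and dies under differentiation. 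By \eqref{R_k} this last expression is exactly $R_k$. Note that $\log(p_k(z)/c)$ is a well-defined holomorphic function on $\mathbb{D}$ because $p_k$ is zero-free there (its zeros are the $k$-th roots of unity other than $1$, all on $\mathbb{T}$), so all the manipulations are legitimate on $\mathrm{Hol}(\mathbb{D})$; this also explains why $T$ fails to be injective, as the kernel contains all constants, and more relevantly $h_{k,c} - h_{k,k} = \frac{1}{1-z}\log(k/c)$ is a nonzero multiple of $\frac{1}{1-z}$, which is not in $H^2$ — foreshadowing the second claim.

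For the membership claim, the key is the behaviour of $h_{k,c}$ near the boundary point $z=1$. Near $z=1$ one has $p_k(z) = \frac{1-z^k}{1-z} \to k$, so $\log(p_k(z)/c) \to \log(k/c)$, which is $0$ iff $c=k$. Thus if $c \neq k$, then $h_{k,c}(z) = \frac{1}{1-z}\log(p_k(z)/c)$ behaves like $\frac{\log(k/c)}{1-z}$ as $z\to 1$, and since $\frac{1}{1-z}\notin H^2$ (its Taylor coefficients are all $1$), a comparison argument shows $h_{k,c}\notin H^2$. Conversely, when $c=k$ we must show $h_{k,k}=h_k\in H^2$; here I would use the local Dirichlet space machinery from Theorem \ref{Local Dirichlet membership}. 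The cleanest route: show that $(1-z)h_k(z) = \log(p_k(z)/k)$ extends analytically across $z=1$ (indeed it is holomorphic on a neighborhood of $1$, vanishing there), so $h_k$ itself is holomorphic near $1$; at the other boundary zeros $\zeta$ of $p_k$ (the nontrivial $k$-th roots of unity), $\log(p_k(z)/k)$ has a logarithmic singularity but $\frac{1}{1-z}$ stays bounded, and a logarithmic singularity is mild enough to keep the $H^2$ integral finite. One can make this precise by checking $\int_0^{2\pi}|h_k^*(e^{i\theta})|^2\,d\theta < \infty$ directly, the only possible trouble coming from neighborhoods of the finitely many roots of unity, where $|\log(p_k(e^{i\theta})/k)|^2$ is locally integrable.

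The main obstacle is the $c=k$ direction: verifying $h_k \in H^2$ rigorously requires controlling the logarithmic singularities of $\log p_k$ at the nontrivial $k$-th roots of unity and confirming they are square-integrable against $d\theta$, together with a clean argument that there is no singularity at $z=1$ after the cancellation $p_k(1)=k$. The $c\neq k$ direction and the identity $Th_{k,c}=R_k$ are routine. I would present the $H^2$-membership either via the explicit boundary-integral estimate sketched above, or — more in the spirit of the paper — by exhibiting $h_k$ as an element of some local Dirichlet space $\mathcal{D}_{\delta_\zeta}\subset H^2$ using the factorization criterion of Theorem \ref{Local Dirichlet membership}, which would also set up notation needed later in Section 5.
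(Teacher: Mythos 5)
Your overall plan is sound: the identity $Th_{k,c}=R_k$ is the routine part, and reducing the case $c\neq k$ to the single fact $\tfrac{1}{1-z}\notin H^2$ via $h_{k,c}-h_{k,k}=\tfrac{\log(k/c)}{1-z}$ is a perfectly good (and slightly more elementary) substitute for the paper's argument, which instead invokes the injectivity of $T$ on $H^2$. Where you diverge is the hard direction $h_k\in H^2$. Your first route (direct boundary analysis) can be completed: writing $p_k(z)=1+z+\ldots+z^{k-1}=\prod_{j=1}^{k-1}(1-z/\omega_j)$ over the nontrivial $k$-th roots of unity $\omega_j$, one sees that $h_k$ is holomorphic across $\mathbb{T}$ away from the $\omega_j$, and near each $\omega_j$ it is an $H^\infty$-multiple of $\log(1-z/\omega_j)\in H^2$ plus a function holomorphic across that arc. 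But be careful with the criterion you propose: finiteness of $\int_0^{2\pi}\abs{h_k^*(e^{i\theta})}^2\,d\theta$ alone does \emph{not} imply $H^2$ membership for a holomorphic function on $\mathbb{D}$; you need either the uniform bound $\sup_{r<1}\int_0^{2\pi}\abs{h_k(re^{i\theta})}^2\,d\theta<\infty$ (which the local decomposition above does deliver) or the observation that $h_k\in N^+$ combined with Smirnov's maximum principle. The paper follows your second route, and the step your sketch omits is the one that makes it a two-line computation: Theorem \ref{Local Dirichlet membership} is applied not to $h_k$ but to $s_k:=\log p_k$, whose membership in $\mathcal{D}_{\delta_1}$ is free because $\mathcal{D}_{\delta_1}(s_k)=\int_{\mathbb{D}}\abs{R_k(z)}^2(1-\abs{z}^2)\,dA(z)=\norm{R_k}_{\mathcal{A}}^2<\infty$ (here $R_k=\Psi r_k\in\mathcal{A}$ automatically, $r_k$ being a bounded sequence in $\ell^2_\omega$); the factorization $s_k=s_k^*(1)+(z-1)f_k$ with $f_k\in H^2$ then gives $h_k=-f_k\in H^2$ and produces the constant $c=k$ as $e^{s_k^*(1)}=p_k(1)$, with no separate analysis at the other roots of unity. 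Finally, one small slip: constants are not in the kernel of $T$ (indeed $T(-1)=\tfrac{1}{1-z}$); on $\mathrm{Hol}(\mathbb{D})$ the kernel of $T$ is spanned by $\tfrac{1}{1-z}$, which is the fact you actually use.
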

\begin{proof}
Let $s_k(z):=\log(1+z+\ldots+z^{k-1})$ for $k\geq 2$. Since $R_k$ belongs to $\mathcal{A}$ for $k\geq 2$, by \eqref{integral norm A}, \eqref{Dirichlet integral} and \eqref{R_k} we have
\begin{align*}
\mathcal{D}_{\delta_1}(s_k)&=\int_{\mathbb{D}}\abs{[\log(1+z+\ldots+z^{k-1})]'}^2\frac{1-\abs{z}^2}{\abs{z-1}^2}dA(z) \\
&=\int_{\mathbb{D}}\abs{R_k(z)}^2(1-\abs{z}^2)dA(z)\\
&=\norm{R_k}_\mathcal{A}^2<\infty.
\end{align*}	
Therefore $s_k$ belongs to the local Dirichlet space $\mathcal{D}_{\delta_1}$. By Theorem \ref{Local Dirichlet membership} there exists $f_k\in H^2$ such that $s_k(z)=s_k^*(1)+(z-1)f_k(z)=\log k+(z-1)f_k(z)$ and it follows immediately that
\[
f_k(z)=\frac{1}{z-1}\log\left(\frac{1+z+\ldots+z^{k-1}}{k}\right).
\]
Hence $h_{k,k}=-f_k\in H^2$. Since clearly $Th_{k,c}=R_k$ for each non-zero $c$ and $T$ is injective on $H^2$, therefore $c=k$ is the only value for which $h_{k,c}\in H^2$.
\end{proof}

Therefore with $h_k:=h_{k,k}$ for all $k\geq 2$ this concludes the proof of Theorem \ref{Main Theorem}. We end this section by giving an alternate proof of the fact that $h_k\in H^2$ for $k\geq 2$ which also provides an explicit formula for the Maclaurin coefficients of the $h_k$.\footnote[1]{The author wishes to thank the anonymous referee for this alternate proof.} 

For $k,n\in\mathbb{N}$, define the function $[k|n]$ to be $1$ if $k$ divides $n$ and $0$ otherwise, and note that $\log(1-z^k)=-\sum_{j\geq 1}z^{jk}/j=-\sum_{n\geq 1}k[k|n]z^n/n$. Then for $k\geq 2$
\begin{align*}
h_k(z)&=\frac{1}{1-z}(\log(1-z^k)-\log(1-z)-\log k) \\
&=\frac{1}{1-z}(-\sum_{n\geq 1}k[k|n]\frac{z^n}{n}+\sum_{n\geq 1}\frac{z^n}{n}-\log k) \\
&=-(\log k)\sum_{n\geq 0}z^n+\frac{1}{1-z}\sum_{n\geq 1}\frac{z^n}{n}(1-k[k|n])=\sum_{n\geq 0}c_n(k)z^n
\end{align*}
where $c_n(k)=-\log k +\sum_{j=1}^n\frac{1}{j}(1-k[k|j])=H(n)-H(n/k)-\log k$ and the function $H(x):=\sum_{n\leq x}\frac{1}{n}$ for $x>0$ and $H(0)=0$. The Euler-Maclaurin summation formula gives
\[
H(x)=\log x+\lambda-\frac{\{x\}}{x}+\int_x^\infty\{t\}\frac{dt}{t^2}=\log x+\lambda+O\left(\frac{1}{x}\right)
\]
where $\lambda$ is Euler's constant and hence $c_n(k)=O(\frac{k}{n})$. Therefore $h_k\in H^2$. \qed

\section{A weighted composition semigroup}
In \cite{Bagchi} Bagchi showed that in addition to Theorem \ref{Baez Duarte}, the RH is equivalent to the density of $\mathrm{span}\{r_k:k\geq 2\}$ in $\ell^2_\omega$. A key ingredient in his proof is a multiplicative semigroup of operators which leave $\mathrm{span}\{r_k:k\geq 2\}$ invariant (see \cite[Theorem 7]{Bagchi}). The relation of invariant subspaces of semigroups with the RH has been evident since the thesis of Nyman \cite{Nyman} (see also \cite{Baez-Duarte 1} and \cite{Nikolski}). 

 For each $n\geq 1$, let $W_n$ be a weighted composition operator on $H^2$ defined by
\begin{equation}\label{W_n}
W_nf(z)=(1+z+\ldots+z^{n-1})f(z^n)=\frac{1-z^n}{1-z}f(z^n).
\end{equation}
Note that each $W_n$ is bounded on $H^2$, $W_1=I$ and $W_mW_n=W_{mn}$ for each $m,n\geq 1$. Hence $\{W_n:n\geq1\}$ is a multiplicative semigroup on $H^2$. Now if we write 
\begin{equation}\label{Alternative h_k}
h_k(z)=\frac{1}{1-z}\left(\log(1-z^k)-\log(1-z)-\log k\right)
\end{equation}
then it is easy to see that $W_nh_k=h_{nk}-h_n$ for all $k,n\geq 1$ (where $h_1\equiv 0$). Hence the linear span of $\{h_k:k\geq 2\}$ is invariant under
$\{W_n:n\geq1\}$. A vector $f\in H^2$ is called a \emph{cyclic} vector for an operator semigroup $\{S_n:n\geq 1\}$ if $\mathrm{span}\{S_nf:n\geq 1\}$ is dense in $H^2$. Hence the following combines Bagchi's result and a generalization of Theorem \ref{Main Theorem}.

\begin{thm}\label{main thm extension} The following statements are equivalent
	\begin{enumerate}
		\item The Riemann hypothesis,
		\item the closure of span$\{h_k:k\geq 2\}$ contains a cyclic vector for $\{W_n:n\geq1\}$, 
		\item span$\{h_k:k\geq 2\}$ is dense in $H^2$.
	\end{enumerate}	
\end{thm}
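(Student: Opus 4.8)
The plan is to prove the equivalence by a cycle of implications, using Theorem~\ref{Main Theorem} as the anchor and the semigroup structure to bridge the gap between "containing the constant $1$" and "containing a cyclic vector." The implication $(3)\Rightarrow(2)$ is trivial, since if $\mathrm{span}\{h_k:k\geq 2\}$ is dense in $H^2$ then its closure is all of $H^2$, which certainly contains a cyclic vector (e.g. the constant $1$, which one checks is cyclic for $\{W_n:n\geq 1\}$ — indeed $W_n 1 = 1+z+\cdots+z^{n-1}$, and these span a dense subspace of $H^2$). The implication $(1)\Rightarrow(3)$ will follow from Theorem~\ref{Main Theorem} together with the invariance noted above: if the RH holds, then $1$ lies in the closure $\mathcal{M}$ of $\mathrm{span}\{h_k:k\geq 2\}$; since $\mathcal{M}$ is a closed subspace invariant under each bounded operator $W_n$, it contains $W_n 1 = 1+z+\cdots+z^{n-1}$ for every $n\geq 1$, and the closed span of these monomial sums is all of $H^2$, forcing $\mathcal{M}=H^2$.

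The substantive implication is $(2)\Rightarrow(1)$. Here I would argue as follows. Suppose the closure $\mathcal{M}$ of $\mathrm{span}\{h_k:k\geq 2\}$ contains a cyclic vector $f$ for $\{W_n:n\geq 1\}$. Since $\mathcal{M}$ is closed and $W_n$-invariant (by $W_n h_k = h_{nk}-h_n$ and boundedness of $W_n$), it contains $\mathrm{span}\{W_n f : n\geq 1\}$, and hence contains its closure, which is all of $H^2$ by cyclicity of $f$. Therefore $\mathcal{M}=H^2$; in particular $1\in\mathcal{M}$, and Theorem~\ref{Main Theorem} yields the RH. (Note this same argument simultaneously gives $(2)\Rightarrow(3)$, so in fact $(2)$ and $(3)$ are immediately interchangeable once one has the $W_n$-invariance of $\mathcal{M}$, and the only real content is the equivalence of either with $(1)$, which is Theorem~\ref{Main Theorem} in the forward direction and the cyclicity-of-$1$ computation in the backward direction.)

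The main obstacle — really the only non-formal point — is verifying that the constant function $1$ is a cyclic vector for the semigroup $\{W_n:n\geq 1\}$ on $H^2$. From \eqref{W_n} we have $W_n 1(z) = 1+z+\cdots+z^{n-1} = \tfrac{1-z^n}{1-z}$, so the closed span of $\{W_n 1 : n\geq 1\}$ must be shown to be all of $H^2$. This is elementary: $W_1 1 = 1$ and $W_2 1 = 1+z$ give $z$, and inductively $W_{n+1}1 - W_n 1 = z^n$ shows every monomial $z^n$ lies in the span, so the span is dense. With this in hand, the proof is a short assembly: $(1)\Rightarrow(3)$ and $(3)\Rightarrow(2)$ and $(2)\Rightarrow(1)$ as above, closing the cycle. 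I would present the $W_n$-invariance of closed subspaces containing $\{h_k\}$ and the cyclicity of $1$ as two preliminary observations, then dispatch the three implications in a single short paragraph.
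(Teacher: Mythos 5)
Your proposal is correct, and it takes a slightly but genuinely different route from the paper. The paper establishes $(1)\leftrightarrow(3)$ by citing Bagchi's theorem (transferred to $H^2$ via the isomorphism $\Phi$), and then separately proves $(1)\rightarrow(2)$ from Theorem~\ref{Main Theorem} plus the cyclicity of $1$, and $(2)\rightarrow(3)$ from the $W_n$-invariance of $\mathrm{span}\{h_k\}$. You instead close the cycle $(1)\Rightarrow(3)\Rightarrow(2)\Rightarrow(1)$ without ever invoking Bagchi: your $(1)\Rightarrow(3)$ combines the forward direction of Theorem~\ref{Main Theorem} (RH gives $1\in\mathcal{M}$) with the $W_n$-invariance of the closed span and the cyclicity of $1$ to conclude $\mathcal{M}=H^2$ directly, and your $(2)\Rightarrow(1)$ uses the same invariance to get $\mathcal{M}=H^2$, hence $1\in\mathcal{M}$, and then the reverse direction of Theorem~\ref{Main Theorem}. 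All the individual steps check out: $W_nh_k=h_{nk}-h_n$ gives invariance of the span (hence of its closure, since each $W_n$ is bounded), and $W_{n+1}1-W_n1=z^n$ gives cyclicity of $1$. What your version buys is self-containedness --- it effectively reproves Bagchi's density strengthening in the $H^2$ setting from the B\'aez-Duarte criterion alone, which is in fact the same semigroup mechanism underlying Bagchi's original argument (as the paper itself remarks at the start of Section~3). What the paper's version buys is brevity and an explicit attribution of the $(1)\leftrightarrow(3)$ equivalence to Bagchi. Either presentation is acceptable; yours would merit a remark that the appeal to Bagchi's theorem can be dispensed with.
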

\begin{proof} The equivalence $(1)\leftrightarrow (3)$ is just Bagchi's result transfered to $H^2$ via the isomorphism $\Phi:\ell^2_\omega\to H^2$. The implication $(1)\rightarrow (2)$ follows from Theorem \ref{Main Theorem} and the fact that $1$ is a cyclic vector for the semigroup $\{W_n:n\geq1\}$. Indeed $(W_n1)(z)=1+z+\ldots+z^{n-1}$ for all $n\geq 1$ so $\mathrm{span}\{W_n1:n\geq 1\}$ contains all analytic polynomials and is hence dense in $H^2$. Finally $(2) \rightarrow (3)$ because if the closure of span$\{h_k:k\geq 2\}$ contains a cyclic vector $f\in H^2$, then it also contains the dense manifold $\mathrm{span}\{W_n f:n\geq 1\}$ by the invariance of span$\{h_k:k\geq 2\}$ under $\{W_n:n\geq1\}$. 
	\end{proof}

In Section 6, we shall see that characterizing the cyclic vectors for $\{W_n:n\geq1\}$ is intimately related to another famous open problem known as the \emph{Periodic Dilation Completeness Problem} (see \cite{Hedenmalm-Seip} and \cite{Nikolski}). 

\section{The density of $\mathrm{span}\{(I-S)h_k:k\geq 2\}$ in $H^2$}
Let $S=T_z$ be the shift operator on $H^2$. Since $I-S$ has dense range (because $I-S^*$ is injective), therefore $\mathrm{span}\{(I-S)h_k:k\geq 2\}$ is dense in $H^2$ under the RH by Theorem \ref{main thm extension}. Proving that this statement is unconditionally true is the main objective of this section and it will play a central role in the rest of this work.

\begin{thm}\label{(I-S)hk}
	The span of $\{(I-S)h_k:k\geq 2\}$ is dense in $H^2$.
\end{thm}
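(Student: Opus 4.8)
The plan is to compute $(I-S)h_k$ explicitly and show that the resulting family spans a dense subspace of $H^2$. Starting from the representation \eqref{Alternative h_k}, write $h_k(z) = \frac{1}{1-z}\bigl(\log(1-z^k) - \log(1-z) - \log k\bigr)$. Applying $I-S$ kills the factor $\frac{1}{1-z}$: indeed $(I-S)\bigl(\frac{g(z)}{1-z}\bigr) = g(z)$ for any $g$, so formally $(I-S)h_k(z) = \log(1-z^k) - \log(1-z) - \log k$. One must be slightly careful since $\log(1-z^k)$ and $\log(1-z)$ are not individually in $H^2$, but their difference minus $\log k$ is precisely $(1-z)h_k(z)$, which lies in $H^2$ because $h_k \in H^2$ by Lemma \ref{H2 Membership lemma}. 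So the clean statement is $(I-S)h_k = (1-z)h_k = \log\!\bigl(\frac{1+z+\cdots+z^{k-1}}{k}\bigr)$, and I would record this identity first.

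Next I would use the semigroup structure. Recall from Section 3 that $W_n h_k = h_{nk} - h_n$. Since $W_n$ is multiplication by $\frac{1-z^n}{1-z}$ followed by composition $z \mapsto z^n$, and since $S$ intertwines appropriately, one checks that $(I-S)$ applied to $h_{nk}-h_n$ gives $\log\!\bigl(\frac{1+\cdots+z^{nk-1}}{nk}\bigr) - \log\!\bigl(\frac{1+\cdots+z^{n-1}}{n}\bigr) = \log\!\bigl(\frac{1-z^{nk}}{1-z^{n}}\bigr) - \log k = \log\!\bigl(\frac{1+z^n+\cdots+z^{n(k-1)}}{k}\bigr)$. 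In particular, taking differences of these for varying $k$, the span of $\{(I-S)h_k : k\geq 2\}$ contains all functions of the form $\log\!\bigl(\frac{1+z+\cdots+z^{k-1}}{k}\bigr)$ and is closed under the operations suggested by the multiplicative structure of the integers. The key observation to exploit is that $\log\!\bigl(\frac{1-z^{ab}}{1-z}\bigr) - \log\!\bigl(\frac{1-z^{a}}{1-z}\bigr) - \log\!\bigl(\frac{1-z^{b}}{1-z}\bigr)$ telescopes, so using the prime factorization one can generate, up to constants, $\log(1-z^p)-\log(1-z)$ for every prime $p$, and hence a rich supply of functions.

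The most efficient route, and the one I would pursue for the main argument, is to show the span contains enough to separate points via an orthogonality argument: suppose $g \in H^2$ is orthogonal to every $(1-z)h_k = \log\!\bigl(\frac{1+\cdots+z^{k-1}}{k}\bigr)$. Using the Maclaurin coefficients of $h_k$ computed at the end of Section 2 — namely $c_n(k) = H(n) - H(n/k) - \log k$ — one gets that the coefficients of $(1-z)h_k$ are $c_n(k) - c_{n-1}(k) = \frac{1}{n}(1 - k[k|n])$ for $n\geq 1$ and $-\log k$ for $n=0$. Thus orthogonality to all these functions reads $-g(0)\log k + \sum_{n\geq 1}\overline{b_n}\,\frac{1}{n}(1 - k[k\mid n]) = 0$ for all $k\geq 2$, where $g=\sum b_n z^n$. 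Writing $\beta_n = \overline{b_n}/n$ (an $\ell^1$-ish but at least $\ell^2$ sequence after reweighting), this becomes $\sum_{n\geq 1}\beta_n - k\sum_{k\mid n}\beta_n = g(0)\log k$, i.e. $A - k\sum_{j\geq 1}\beta_{kj} = g(0)\log k$ where $A=\sum\beta_n$. This is a linear system in the unknowns $\beta_n$ indexed by $k$; the hard part — and where I expect the real work to be — is showing this forces $g=0$. I would analyze it by Möbius-type inversion / Dirichlet series: set $F(s) = \sum \beta_n n^{-s}$; the relations connect $F$ at shifted arguments, and the presence of the $\log k$ term on the right (the contribution of $g(0)$) together with the requirement that $(\beta_n)$ come from an $H^2$ function should be incompatible unless $g(0)=0$ and then $\beta_n=0$ for all $n$. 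An alternative, possibly cleaner, finish is to exhibit concrete finite linear combinations of the $(1-z)h_k$ that converge in $H^2$ to each monomial $z^n$: using $(1-z)h_p$ for primes $p > n$ gives $\frac{1}{n}z^n$ plus a tail of size $O(p/\cdot)$ that one controls, and subtracting off the lower-order monomials inductively. I would present whichever of these two the referee's computation makes shortest, but the orthogonality-plus-Dirichlet-series route is the one I would attempt first, with the number-theoretic inversion being the anticipated obstacle.
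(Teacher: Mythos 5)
Your opening computation is correct and matches the paper's starting point: $(I-S)h_k=\log(1-z^k)-\log(1-z)-\log k$, and the coefficient formula $c_n(k)-c_{n-1}(k)=\tfrac1n(1-k[k\mid n])$ is right. But the proof is not complete: you explicitly defer the one step that carries all the content. Your orthogonality relations reduce to $kB(k)=A-\overline{g(0)}\log k$ for all $k\ge 2$, where $B(k)=\sum_{j\ge1}\beta_{kj}$ and $A=B(1)$, and you then say that forcing $g=0$ from this is ``the anticipated obstacle.'' That obstacle is precisely the theorem. To close it one must invert the system by M\"obius summation, $\beta_n=\lim_{M\to\infty}\sum_{m\le M}\mu(m)B(nm)$ (justified via the divisor bound $\sigma(n)=o(n^\eps)$ and dominated convergence), and then evaluate $\sum_m\mu(m)\bigl(A-\overline{g(0)}\log(nm)\bigr)/(nm)$ using the Prime Number Theorem in the two forms $\sum_m\mu(m)/m=0$ and $\sum_m\mu(m)\log m/m=-1$; this yields $\beta_n=\overline{g(0)}/n$, i.e.\ $b_n\equiv g(0)$, which is incompatible with $g\in H^2$ unless $g=0$. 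Without invoking PNT (or something equivalent to it) no such argument can succeed, since the statement is genuinely of that analytic depth; your alternative route B (building monomials from $(1-z)h_p$ for large primes $p$) does not isolate monomials at all --- each $(1-z)h_p$ contains the full partial sum $\sum_{n<p}z^n/n$ and there is no evident way to cancel it with finitely many terms.

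For comparison, the paper avoids the dual/orthogonality formulation entirely: it uses the intertwining $T_n(I-S)=(I-S)W_n$ to observe that $\mathrm{span}\{(I-S)h_k\}$ is invariant under $T_nf(z)=f(z^n)$, so it suffices to show the closure contains the single cyclic vector $1-z$; this is done by proving that $\sum_{k\ge2}\frac{\mu(k)}{k}(I-S)h_k$ converges to $1-z$ in $H^2$, using exactly the two PNT identities above plus the estimate $\|\phi_n\|_{H^2}^2\lesssim\sum_{j>n}\sigma(j)^2/j^2\to0$. Your orthogonality route is essentially the dual of this and can be made to work with the same number-theoretic inputs, but as written the decisive M\"obius--PNT computation is missing, so there is a genuine gap.
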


Since convergence in $H^2$ implies uniform convergence on compact subsets of $\mathbb{D}$, we obtain a \emph{weak} version of the RH. 

\begin{thm}\label{density comp-open}
The span of $\{h_k:k\geq 2\}$ is dense in $H^2$ with the compact-open topology.
\end{thm}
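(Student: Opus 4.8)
The plan is to derive Theorem \ref{density comp-open} directly from Theorem \ref{(I-S)hk} together with the simple observation that the operator $I-S$ acts as a bijective continuous operator on $\mathrm{Hol}(\mathbb{D})$ with continuous inverse in the compact-open topology. Concretely, if $g=(I-S)f$ for $f,g\in\mathrm{Hol}(\mathbb{D})$, then $f(z)=\frac{1}{1-z}\int_0^z$ — more cleanly, writing $g(z)=\sum b_n z^n$ and $f(z)=\sum a_n z^n$, the relation $g=(I-S)f$ means $b_0=a_0$ and $b_n=a_n-a_{n-1}$, so $a_n=\sum_{j=0}^n b_j$, i.e. $f(z)=\frac{1}{1-z}\,\widetilde{g}(z)$ where $\widetilde{g}$ is obtained by the Cesàro-type partial-sum transform; equivalently $(I-S)^{-1}$ is multiplication by $\frac{1}{1-z}$ composed with the primitive operation. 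The essential point is only that $(I-S)^{-1}$ is a well-defined continuous linear operator on $\mathrm{Hol}(\mathbb{D})$ with the compact-open topology.

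First I would record that statement: the map $I-S\colon \mathrm{Hol}(\mathbb{D})\to\mathrm{Hol}(\mathbb{D})$ is a linear homeomorphism. Injectivity is immediate from $b_n = a_n - a_{n-1}$ with $b_0=a_0$; surjectivity and the explicit inverse follow from $a_n=\sum_{j\le n}b_j$, and one checks this series defines a holomorphic function on $\mathbb{D}$ (the radius of convergence is unchanged since the coefficients grow at most polynomially faster). Continuity of $I-S$ is clear, and continuity of the inverse follows because uniform convergence on a disk $\{|z|\le r\}$ of a sequence $g_m\to g$ gives, via the Cauchy estimates, control of the Taylor coefficients of $g_m$ uniformly, hence of the partial sums, hence uniform convergence of $(I-S)^{-1}g_m$ on any slightly smaller disk.

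Next I would combine this with Theorem \ref{(I-S)hk}: the span of $\{(I-S)h_k:k\ge 2\}$ is dense in $H^2$, hence (since $H^2$-convergence implies compact-open convergence) dense in $\mathrm{Hol}(\mathbb{D})$ with the compact-open topology. Wait — that is not quite what is needed, since density in $H^2$ in $H^2$-norm only gives compact-open density of that span inside $H^2$, not inside all of $\mathrm{Hol}(\mathbb{D})$; but $\mathrm{span}\{h_k\}$ lives in $H^2$ and we only claim compact-open density within $H^2$ itself, which is exactly the content of Theorem \ref{density comp-open}. So: applying the continuous inverse $(I-S)^{-1}$ to the $H^2$-dense (hence compact-open dense in $H^2$) set $\mathrm{span}\{(I-S)h_k\}$ and using that $(I-S)^{-1}$ is a compact-open homeomorphism of $\mathrm{Hol}(\mathbb{D})$ carrying $(I-S)h_k$ to $h_k$, I conclude that $\mathrm{span}\{h_k:k\ge 2\}=(I-S)^{-1}\,\mathrm{span}\{(I-S)h_k:k\ge 2\}$ is dense in $(I-S)^{-1}H^2\supseteq H^2$ for the compact-open topology, which yields the theorem.

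The only genuine subtlety — and the step I would be most careful about — is the interplay of the two topologies: $(I-S)^{-1}$ need not map $H^2$ into $H^2$ (indeed it does not, since $1/(1-z)\notin H^2$), so one must be slightly careful to argue entirely inside $\mathrm{Hol}(\mathbb{D})$ with the compact-open topology. The clean way is: let $f\in H^2$ be arbitrary and target it; then $(I-S)f\in\mathrm{Hol}(\mathbb{D})$, and by Theorem \ref{(I-S)hk} there are polynomials $p_m$ in the $h_k$ with $(I-S)p_m\to (I-S)f$ in $H^2$-norm, hence compact-open; applying the compact-open-continuous map $(I-S)^{-1}$ gives $p_m\to f$ compact-open, with each $p_m\in\mathrm{span}\{h_k:k\ge2\}$. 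This is exactly Theorem \ref{density comp-open}, and no other obstacle arises.
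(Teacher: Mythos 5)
Your proposal is correct and follows essentially the same route as the paper: the paper likewise observes that the formal inverse of $I-S$ is multiplication by $\varphi(z)=\frac{1}{1-z}$, which is continuous for the compact-open topology though unbounded on $H^2$, and then applies Theorem \ref{(I-S)hk} exactly as in your final paragraph. Your intermediate detour through partial sums of Taylor coefficients and Cauchy estimates is unnecessary (the inverse is simply multiplication by the nonvanishing holomorphic function $\frac{1}{1-z}$, which is manifestly a compact-open homeomorphism of $\mathrm{Hol}(\mathbb{D})$), but it does no harm.
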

\begin{proof} The formal inverse of $I-S$ is the Toeplitz operator $T_\varphi$ of multiplication by the function $\varphi(z)=\frac{1}{1-z}$. Although $T_\varphi$ is unbounded on $H^2$ (otherwise Theorem \ref{(I-S)hk} would imply the RH), it is still continuous on $H^2$ with the compact-open topology. Therefore the result follows immediately from Theorem \ref{(I-S)hk}.
\end{proof}

Define the multiplicative operator semigroup $\{T_n:n\geq 1\}$ on $H^2$ by
\begin{equation}\label{T_n}
T_nf(z)=f(z^n).
\end{equation}
Then by \eqref{W_n} and \eqref{T_n} it is easily seen that
\begin{equation}\label{quasiconjugacy}
T_n(I-S)=(I-S)W_n \ \ \ \ \forall \ n\geq 1.
\end{equation}
Recall that span$\{h_k:k\geq 2\}$ is invariant under $\{W_n:n\geq1\}$ (see Section 3), and hence \eqref{quasiconjugacy} implies that span$\{(I-S)h_k:k\geq 2\}$ is invariant under $\{T_n:n\geq 1\}$. So to prove Theorem \ref{(I-S)hk}, it is enough to prove that the closure of span$\{(I-S)h_k:k\geq 2\}$ contains a cyclic vector for $\{T_n:n\geq 1\}$. And the cyclic vector we consider is $1-z$. Indeed, if $f\in H^2$ is orthogonal to each $T_n(1-z)=1-z^n$ then $\widehat{f}(0)=\widehat{f}(n)$ for all $n\geq 1$ and hence $f\equiv 0$. Hence the next result completes the proof of Theorem \ref{(I-S)hk}.
 
\begin{lem}\label{Necessary condition 2} The series $\sum_{k=2}^\infty\frac{\mu(k)}{k}(I-S)h_k$ converges to $1-z$ in $H^2$, where $\mu$ is the M\"{o}bius function.
\end{lem}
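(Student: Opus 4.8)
The plan is to compute the Maclaurin coefficients of the proposed sum and identify them with those of $1-z$. Recall from the alternate proof in Section 2 that $h_k(z)=\sum_{n\geq 0}c_n(k)z^n$ where $c_n(k)=H(n)-H(n/k)-\log k$, with $H(x)=\sum_{m\leq x}1/m$ for $x>0$ and $H(0)=0$. Applying $I-S$ kills the telescoping: the $n$-th coefficient of $(I-S)h_k$ is $c_n(k)-c_{n-1}(k)$ for $n\geq 1$ and $c_0(k)=-\log k$ for $n=0$. Since $H(n)-H(n-1)=1/n$ and $H(n/k)-H((n-1)/k)=[k\mid n]/(n/k)\cdot$ (more precisely equals $k/n$ if $k\mid n$ and $0$ otherwise, because $H$ only jumps at integers and $n/k$ is an integer iff $k\mid n$), we get that the $n$-th Maclaurin coefficient of $(I-S)h_k$ equals $\frac1n(1-k[k\mid n])$ for $n\geq 1$, and $-\log k$ for $n=0$. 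This matches the expansion $(I-S)h_k(z)=-\log k+\sum_{n\geq 1}\frac1n(1-k[k\mid n])z^n$, which one also reads off directly from \eqref{Alternative h_k} since $(I-S)\,\tfrac1{1-z}g=g$.

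Next I would sum against $\mu(k)/k$. Formally, the $n$-th coefficient of $\sum_{k\geq 2}\frac{\mu(k)}{k}(I-S)h_k$ is, for $n\geq 1$,
\[
\sum_{k\geq 2}\frac{\mu(k)}{k}\cdot\frac1n\bigl(1-k[k\mid n]\bigr)=\frac1n\sum_{k\geq 2}\frac{\mu(k)}{k}-\frac1n\sum_{\substack{k\geq 2\\ k\mid n}}\mu(k).
\]
Using $\sum_{k\geq 1}\mu(k)/k=0$ (equivalently the prime number theorem, or rather the classical fact that $\sum\mu(k)/k=0$), the first sum is $-1$; and $\sum_{k\mid n}\mu(k)=[n=1]$, so $\sum_{k\geq 2,\,k\mid n}\mu(k)=[n=1]-1$. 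Hence the $n$-th coefficient equals $\frac1n\bigl(-1-([n=1]-1)\bigr)=\frac1n(-[n=1])$, which is $-1$ when $n=1$ and $0$ for $n\geq 2$. For $n=0$ the coefficient is $-\sum_{k\geq 2}\frac{\mu(k)}{k}\log k$, which is a known constant; I would need it to equal $1$. Indeed $\sum_{k\geq 1}\frac{\mu(k)\log k}{k}=-1$ (this is the logarithmic derivative identity $\bigl(1/\zeta\bigr)$-type evaluation: differentiating $\sum\mu(k)k^{-s}=1/\zeta(s)$ and letting $s\to 1$, using $\zeta(s)\sim 1/(s-1)$), and the $k=1$ term vanishes, so $-\sum_{k\geq 2}\frac{\mu(k)\log k}{k}=1$ as required. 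Thus coefficient-wise the sum equals $1-z$.

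To upgrade this to genuine convergence in $H^2$, I would estimate partial sums. Write $g_k:=(I-S)h_k$ and note $\|g_k\|_{H^2}^2=\log^2 k+\sum_{n\geq 1}\frac1{n^2}(1-k[k\mid n])^2\leq \log^2 k+\sum_{n\geq 1}\frac1{n^2}+k^2\sum_{k\mid n}\frac1{n^2}=\log^2 k+\tfrac{\pi^2}{6}+\tfrac{\pi^2}{6}$, so $\|g_k\|_{H^2}=O(\log k)$; this alone is too weak to sum $\sum\mu(k)/k\cdot g_k$ absolutely. Instead I would work directly with the tail of the coefficient series: fix $N$ and compare the partial sum $\sum_{k=2}^N\frac{\mu(k)}{k}g_k$ with $1-z$ by bounding $\Bigl\|(1-z)-\sum_{k=2}^N\frac{\mu(k)}{k}g_k\Bigr\|_{H^2}^2=\sum_{n\geq 0}\bigl|e_n(N)\bigr|^2$ where $e_0(N)=1+\sum_{k=2}^N\frac{\mu(k)\log k}{k}$ and for $n\geq 1$, $e_n(N)=\frac1n\bigl(1+\sum_{k=2}^N\frac{\mu(k)}{k}-\sum_{\substack{2\le k\le N,\,k\mid n}}\mu(k)\bigr)$. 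Splitting according to whether $n\le N$ or $n>N$: for $n>N$ the divisor sum over $k\le N$ with $k\mid n$ need not telescope to $[n=1]-1$, so one must control it, but the key point is that $\sum_{k\le N}\frac{\mu(k)}k\to 0$ and $\sum_{k\le N}\frac{\mu(k)\log k}k\to -1$ with quantitative rates (e.g. $O(1/\log N)$, or $o(1)$ suffices), and the divisor-sum terms contribute $\sum_n \frac1{n^2}|\cdots|^2$ which one bounds by a tail of $\sum 1/n^2$ times $\sum_{k\le N}1$, or more carefully by interchanging summation. The main obstacle is precisely this $H^2$-norm tail estimate — ensuring that the $n>N$ block of coefficients, where the Möbius divisor sums are truncated and do not simplify, goes to zero. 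I would handle it by the Cauchy–Schwarz/Rankin-type bound $\sum_{n>N}\frac1{n^2}\bigl(\sum_{k\le N,\,k\mid n}1\bigr)^2\ll \sum_{n>N}\frac{d(n)^2}{n^2}\to 0$ together with the classical estimates on $\sum_{k\le N}\mu(k)/k$ and $\sum_{k\le N}\mu(k)\log k/k$. Once the tail is controlled, the finitely many "small $n$" coefficients converge term by term to those of $1-z$, completing the proof.
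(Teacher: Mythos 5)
Your proposal is correct and follows essentially the same route as the paper: the paper's interchange of summation in $\sum_{k\le n}\frac{\mu(k)}{k}\log(1-z^k)$ is exactly your coefficient-wise computation, both arguments rest on the PNT identities $\sum_k\mu(k)/k=0$ and $\sum_k\mu(k)\log k/k=-1$ together with $\sum_{d\mid j}\mu(d)=\bigl[\tfrac{1}{j}\bigr]$, and the problematic $n>N$ tail is handled in both by the same divisor-count bound $d(n)=o(n^{\epsilon})$ yielding $\sum_{n>N}d(n)^2/n^2\to 0$. (There is a harmless sign slip in your displayed formula for $e_n(N)$ --- for $2\le n\le N$ the truncated divisor sum equals $-1$, so the error is $-\tfrac{1}{n}\bigl(1+\sum_{k=2}^{N}\mu(k)/k\bigr)$ --- but the conclusions you draw are the correct ones.)
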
 
Recall that the M\"{o}bius function is defined on $\mathbb{N}$ by $\mu(k)=(-1)^s$ if $k$ is the product of $s$ distinct primes, and $\mu(k)=0$ otherwise. In the proof we shall need the \emph{Prime Number Theorem} in the equivalent forms 

\begin{equation}\label{PNT mu}
\sum_{k=1}^\infty \frac{\mu(k)}{k}=0 \ \ \mathrm{and} \ \ \sum_{k=1}^\infty\frac{\mu(k)\log k}{k}=-1
\end{equation}
(see \cite[Thm. 4.16]{Apostol} and \cite[p. 185, Excercise 16]{Montgomery-Vaughan}).
\begin{proof}
	One has to prove that 
	\begin{equation}\label{App. in H^2}
	\norm{\sum_{k=2}^n\frac{\mu(k)}{k}(I-S)h_k+z-1}_{H^2}\To 0 
	\end{equation}
	as $n\to\infty$. Since $(I-S)h_k(z)=\log(1-z^k)-\log(1-z)-\log k$ (see \eqref{Alternative h_k}), we get
	
	\begin{equation}\label{eq1 thm 5}
	\sum_{k=2}^n\frac{\mu(k)}{k}(I-S)h_k(z)=
	\sum_{k=1}^n\frac{\mu(k)}{k}\log(1-z^k)-\sum_{k=1}^n\frac{\mu(k)}{k}[\log(1-z)+\log{k}].  
	\end{equation}
	First note that the last sum on the right of \eqref{eq1 thm 5} tends to $1$ as $n\to\infty$ by \eqref{PNT mu}. Writing the first sum as a double sum after noting that $\log(1-z^k)=-\sum_{j=1}^\infty \frac{z^{jk}}{j}$, interchanging the order of summation and using the basic identity $\sum_{d|j}\mu(d)=\left[\frac{1}{j}\right]$ if $j\geq 1$ \cite[Thm 2.1]{Apostol} ($[x]$ denotes the integer part of $x$), we get
	\begin{align*}
&\sum_{k=1}^n\frac{\mu(k)}{k}\log(1-z^k)=-	\sum_{k=1}^n\frac{\mu(k)}{k}\sum_{j=1}^\infty \frac{z^{jk}}{j}=-	\sum_{k=1}^n\mu(k)\sum_{j=1}^\infty \frac{z^{jk}}{jk} \\
	&=-\sum_{j=1}^\infty  \frac{z^{j}}{j}\sum_{\substack{d|j \\ 1\leq d\leq n}}\mu(d)=-\sum_{j=1}^n \frac{z^{j}}{j}\sum_{d|j}\mu(d)-\sum_{j=n+1}^\infty \frac{z^{j}}{j}\sum_{\substack{d|j \\ 1\leq d\leq n}}\mu(d) \\
	&=-\sum_{j=1}^n \frac{z^{j}}{j}\left[\frac{1}{j}\right]-\sum_{j=n+1}^\infty \frac{z^{j}}{j}\sum_{\substack{d|j \\ 1\leq d\leq n}}\mu(d)=-z-\phi_n(z). \numberthis \label{eq 2 thm 5}
	\end{align*}	
	Therefore by \eqref{eq1 thm 5} and \eqref{eq 2 thm 5}, we will  prove \eqref{App. in H^2} once we prove that $\norm{\phi_n}_{H^2}\to 0$ as $n\to\infty$. Since
	\begin{equation}\label{phi_n}
	\phi_n(z)=\sum_{j=n+1}^\infty \frac{z^{j}}{j}\sum_{\substack{d|j \\ 1\leq d\leq n}}\mu(d)
	\end{equation}
	and if $\sigma(n)$ denotes the number of divisors of $n$, then it follows that
	\[
	|\sum_{\substack{d|j \\ 1\leq d\leq n}}\mu(d)|\leq \sum_{d|j}1=  \sigma(j).
	\]
	The function $\sigma$ satisfies the relation $\sigma(n)=o(n^{\epsilon})$ for every $\epsilon>0$ \cite[p. 296]{Apostol}. In particular, $\sigma(n)\lesssim n^\epsilon$ for some $0<\epsilon<\frac{1}{2}$, and therefore by \eqref{phi_n}
	\[
	||\phi_n||_{H^2}^2\leq\sum_{j=n+1}^\infty\frac{\sigma(j)^2}{j^2}\lesssim\sum_{j=n+1}^\infty j^{2\epsilon-2}\To 0
	\]
	as $n\to\infty$. This proves \eqref{App. in H^2} and hence the lemma.
\end{proof}

\section{Functions orthogonal to $\{h_k:k\geq 2\}$ }
The RH is equivalent to $\{h_k:k\geq 2\}^\perp$ being trivial $\{0\}$ (see Theorem \ref{main thm extension}). The main result of this section shows that $\{h_k:k\geq 2\}^\perp$ is indeed in a sense very small.

\begin{thm}\label{N perp}
	We have
	\[
	\{h_k:k\geq 2\}^\perp\cap\mathcal{D}_{\delta_1}=\{0\}
	\]
	where $\mathcal{D}_{\delta_1}$ is the local Dirichlet space at $1$. In particular $\{h_k:k\geq 2\}^\perp$ contains no function holomorphic on a neighborhood of the closed unit disk $\overline{\mathbb{D}}$.
\end{thm}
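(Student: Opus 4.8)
The plan is to exploit Theorem \ref{Local Dirichlet membership}: a function $f\in\mathcal{D}_{\delta_1}$ has the form $f(z)=f^*(1)+(z-1)g(z)$ with $g\in H^2$, so $(I-S)f = (1-z)f = -(z-1)f$... wait, more usefully, $(I-S)$ applied to things in $\mathcal{N}$ lands densely in $H^2$ by Theorem \ref{(I-S)hk}. Concretely, suppose $f\in\{h_k:k\geq 2\}^\perp\cap\mathcal{D}_{\delta_1}$. I want to show $f=0$. The key identity to extract is a relation between $\langle f, h_k\rangle$ and $\langle (I-S)^* f, h_k\rangle$ or between $\langle f, h_k\rangle$ and $\langle f, (I-S)h_k\rangle$, combined with membership of $f$ in the local Dirichlet space to control the ``boundary term'' that $I-S$ would otherwise lose.

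The central computation is this. Since $(I-S)h_k(z) = \log(1-z^k)-\log(1-z)-\log k = h_k(z) - z h_k(z)$, for any $f\in H^2$ we have $\langle f,(I-S)h_k\rangle = \langle f,h_k\rangle - \langle S^*f, h_k\rangle = \langle (I-S^*)f, h_k\rangle$. Now if $f\perp h_k$ for all $k\geq 2$, this does NOT immediately give $(I-S^*)f\perp h_k$; rather it says $\langle (I-S^*)f,h_k\rangle = -\langle S^*f,h_k\rangle$. So instead I would iterate: if $f\perp\{h_k\}$, then $\langle (I-S^*)f, h_k\rangle = -\langle S^* f, h_k\rangle$, and I need another handle. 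The better route uses $\mathcal{D}_{\delta_1}$ directly. Write $f(z) = a + (z-1)g(z)$ with $a = f^*(1)$ and $g\in H^2$ (Theorem \ref{Local Dirichlet membership}). Then $\langle f, h_k\rangle = a\langle 1, h_k\rangle + \langle (z-1)g, h_k\rangle = a\langle 1,h_k\rangle - \langle g, (I-S)h_k\rangle$ (using $\langle (z-1)g,h_k\rangle = \langle g, \overline{(z-1)}h_k\rangle$, i.e. $= \langle (S-I)^* \cdot$, being careful: $\langle (z-1)g, h_k\rangle = \langle g, (\bar z - 1)h_k\rangle$; and $T_{\bar z - 1}h_k = S^*h_k - h_k = -(I-S^*)h_k$, which is the adjoint relation, not $(I-S)h_k$). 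I would reconcile this by noting that $(I-S)$ and $(I-S^*)$ both have dense range, and that $\langle g, (I-S^*)h_k\rangle = \langle (I-S)g, h_k\rangle$, so $f\perp h_k$ for all $k$ forces $a\langle 1,h_k\rangle = \langle (I-S)g, h_k\rangle$ for all $k\geq 2$.

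From there the argument splits. First handle $a$: I would compute $\langle 1, h_k\rangle$ explicitly — from the Maclaurin coefficients $c_n(k) = H(n)-H(n/k)-\log k$ derived in the excerpt, $\langle 1, h_k\rangle = \overline{c_0(k)} = -\log k$. So the orthogonality conditions read $\langle (I-S)g - a\cdot\boldsymbol{?}\,, h_k\rangle$... more precisely $\langle (I-S)g, h_k\rangle = -a\log k$ for all $k\geq 2$. I would like to interpret $-\log k$ as $\langle u, h_k\rangle$ for a fixed $u\in H^2$; indeed since $\langle 1,h_k\rangle = -\log k$, this says $\langle (I-S)g - a\cdot 1, h_k\rangle = 0$, i.e. $(I-S)g - a\in\{h_k:k\geq2\}^\perp$. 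Now apply Theorem \ref{(I-S)hk}: $(I-S)\mathcal{N}$ is dense, so $\mathcal{N}^\perp \cap (I-S)^{-1}(\text{anything})$... the cleanest finish: the element $(I-S)g-a$ lies in $\mathcal{N}^\perp$; I would pair it against the approximating combinations $\sum c_k (I-S)h_k \to v$ for arbitrary $v\in H^2$ (density, Theorem \ref{(I-S)hk}) to get $\langle (I-S)g - a, v\rangle = \lim \sum \bar c_k \langle (I-S)g-a, (I-S)h_k\rangle$. But $\langle (I-S)g-a,(I-S)h_k\rangle = \langle (I-S)^*((I-S)g-a), h_k\rangle$ — need this to vanish, which requires $(I-S)^*((I-S)g-a)\in\mathcal{N}^\perp$ again, looping. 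The honest resolution, and the step I expect to be the main obstacle, is to set up the right bootstrap: show that $f\in\mathcal{N}^\perp\cap\mathcal{D}_{\delta_1}$ implies $S^*f$ (or $(I-S^*)^{-1}$-type iterates) stays in $\mathcal{N}^\perp$, and use the density of $(I-S)\mathcal{N}$ to conclude $f\perp$ a dense set. Concretely I would show $\langle f, (I-S)h_k\rangle = \langle (I-S^*)f, h_k\rangle$ and that $(I-S^*)f\in\mathcal{D}_{\delta_1}$ with controlled norm, then argue that $f\perp\mathcal{N}$ and $f\perp(I-S)\mathcal{N}$ (the latter from $\mathcal{D}_{\delta_1}$-membership letting us ``move'' $I-S$ across without a boundary defect because $f^*(1)$ is well-defined and the relevant boundary term cancels), and since $(I-S)\mathcal{N}$ is dense, $f=0$. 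The last sentence of the theorem is then immediate: any $f$ holomorphic on a neighborhood of $\overline{\mathbb{D}}$ lies in every local Dirichlet space, in particular $\mathcal{D}_{\delta_1}$ (e.g. by Theorem \ref{Local Dirichlet membership}, since $f(z) = f(1) + (z-1)g(z)$ with $g$ also holomorphic near $\overline{\mathbb{D}}$, hence in $H^2$), so such an $f$ in $\mathcal{N}^\perp$ must vanish.
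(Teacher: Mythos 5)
There is a genuine gap, and you partly diagnose it yourself. Your central computation is circular: after writing $f=a+(z-1)g$ with $g\in H^2$ (Theorem \ref{Local Dirichlet membership}), the element you extract, $(I-S)g-a$, is identically equal to $-f$, because $(z-1)g=Sg-g=-(I-S)g$ and hence $f=a-(I-S)g$. So the conclusion ``$(I-S)g-a\in\{h_k:k\geq 2\}^\perp$'' is a verbatim restatement of the hypothesis and carries no new information. Your fallback ``bootstrap'' then points the adjoint in the wrong direction: to conclude $f\perp(I-S)h_k$ you would need $\langle (I-S^*)f,h_k\rangle=0$, i.e.\ $(I-S^*)f\in\{h_k:k\geq2\}^\perp$, and nothing in the hypotheses gives this ($\{h_k:k\geq2\}^\perp$ is not known to be $S^*$-invariant); the claim that ``the relevant boundary term cancels'' is asserted rather than proved.

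The repair is to run the adjoint the other way, and this is precisely what the $\mathcal{D}_{\delta_1}$ hypothesis is for: from $f=a+(z-1)g$ set $v:=a+zg\in H^2$. Then $S^*v=g$, so $(I-S^*)v=a+zg-g=a+(z-1)g=f$; that is, $f$ lies in the \emph{range} of $(I-S)^*$. Consequently $0=\langle f,h_k\rangle=\langle (I-S^*)v,h_k\rangle=\langle v,(I-S)h_k\rangle$ for every $k\geq 2$, and the density of $\mathrm{span}\{(I-S)h_k:k\geq2\}$ (Theorem \ref{(I-S)hk}) forces $v=0$, hence $f=(I-S^*)v=0$. The paper packages exactly this mechanism in operator-theoretic language: with $\varphi(z)=1/(1-z)\in N^+$ and associated pair $(b,a)$, one has $\mathrm{dom}(T_\varphi^*)=\mathcal{H}(b)=\mathcal{D}_{\delta_1}$ by Theorems \ref{dom of adjoint} and \ref{which DR=Dirichlet}, so $\langle T_\varphi^*f,(I-S)h_k\rangle=\langle f,h_k\rangle=0$ gives $T_\varphi^*f=0$ by density, and then $f=0$ because $T_\varphi$ maps its domain $(1-z)H^2$ onto all of $H^2$. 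Your handling of the final ``in particular'' clause is correct and is a legitimate elementary alternative to the paper's appeal to Theorem \ref{nonextreme H(b)}, but the main orthogonality-transfer step as you propose it does not close.
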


The key idea is to use the formal inverse $T_\varphi$ of $I-S$, where $\varphi(z)=\frac{1}{1-z}$ is clearly an $N^+$ function. Then there is a pair $(b,a)$ associated with $\varphi$ where
\begin{equation}\label{a(z) golden}
a(z)=\frac{\gamma(1-z)}{(\gamma+1)-z}
\end{equation}
and $\gamma=\frac{1+\sqrt{5}}{2}$ is the \emph{golden ratio} (see \cite[page 284]{unbounded toplitz}). Therefore by Theorem \ref{dom of adjoint}, Theorem \ref{which DR=Dirichlet} and \eqref{a(z) golden} we immediately see that

\begin{equation}\label{domT* main}
\mathrm{dom}(T_\varphi^*)=\mathcal{H}(b)=\mathcal{D}_{\delta_1}
\end{equation} 
where $T_\varphi^*$ is the adjoint of $T_\varphi$ (see subsection 1.4).

\begin{proof} Let $g_k:=(I-S)h_k$ for each $k\geq 2$ and note that $\mathrm{span}\{g_k:k\geq 2\}$ is dense in $H^2$ by Theorem \ref{(I-S)hk}. Also note $g_k\in\mathrm{dom}(T_\varphi)$ because $h_k=T_\varphi g_k$ and by  \eqref{Toeplitz domain}. Now let $p$ be an element in $\{h_k:k\geq 2\}^\perp\cap\mathrm{dom}(T_\varphi^*)$. Hence for each $k\geq 2$, we have
	\[
	\langle T_\varphi^* p,g_k\rangle=\langle  p,T_\varphi g_k\rangle=\langle p,h_k\rangle=0.
	\]
	Therefore $T_\varphi^* p\equiv0$. But this implies that $p\equiv 0$, because 
		\[
		\langle p,T_\varphi f\rangle=\langle T_\varphi^*p, f\rangle=0
		\]
	for each $f\in\mathrm{dom}(T_\varphi)$ and the range of $T_\varphi$ is all of $H^2$ (it is the domain of $I-S$).
	Hence $\{h_k:k\geq 2\}^\perp\cap\mathrm{dom}(T_\varphi^*)=\{0\}$, \eqref{domT* main} and Theorem \ref{nonextreme H(b)} complete the proof.
	\end{proof}

\section{The Periodic Dilation Completeness Problem PDCP} The PDCP asks which $2$-periodic functions $\phi$ on $(0,\infty)$ have the property that 
\[
\mathrm{span}\{\phi(nx):n\geq 1\}
\]
is dense in $L^2(0,1)$. In this case we shall just say that $\phi$ is a \emph{PDCP function}. This difficult open problem was first considered independently by Wintner \cite{Wintner} and Beurling \cite{Beurling 2}. See \cite{Hedenmalm-Seip} and \cite{Nikolski} for beautiful modern treatments. The main result of this section shows that the cyclic vectors for $\{W_n:n\geq 1\}$ in $H^2$  (see Theorem \ref{main thm extension}) are properly embedded into the PDCP functions.

\begin{thm}\label{PDCP embedding}
	There exists an injective linear map $V:H^2\to L^2(0,1)$ such that if $f$ is a cyclic vector for $\{W_n:n\geq 1\}$ in $H^2$, then $Vf$ is a PDCP function.
\end{thm}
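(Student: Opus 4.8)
The plan is to build the map $V$ out of the natural identifications that connect $H^2$ on the disk with the Hilbert spaces appearing in the Nyman--Beurling/Báez--Duarte circle of ideas. Recall that $\Phi:\ell^2_\omega\to H^2$ is the isometric isomorphism of subsection 1.2, so $\Phi^{-1}$ identifies $H^2$ with the weighted sequence space $\ell^2_\omega$. On the other hand, the classical Nyman--Beurling setup provides a natural way to send a sequence $x\in\ell^2_\omega$ to a function on $(0,\infty)$: namely, one associates to the generator $r_k(n)=k\{n/k\}$ the $k$-periodic sawtooth function $t\mapsto k\{t/k\}$, and more generally one wants a linear map that intertwines the multiplicative structure $W_n$ on the $H^2$ side with dilation $x\mapsto x(n\cdot)$ on the $L^2(0,1)$ side. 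Concretely, I would first write down the composite
\[
V := \iota\circ \Phi^{-1},
\]
where $\iota:\ell^2_\omega\to L^2(0,1)$ is the linear map sending a sequence $x$ to the function $\sum_{n\ge 1} x(n)\,e_n$ for a suitable orthogonal (not necessarily orthonormal) system $\{e_n\}$ in $L^2(0,1)$ — the natural candidate being $e_n(t)=\mathbf 1_{(0,1)}(t)\,P_n(t)$ built from the dilated indicator structure, chosen so that $\iota$ is bounded and injective. The point is to arrange things so that $V$ carries the semigroup $\{W_n:n\ge 1\}$ on $H^2$ to the dilation semigroup $t\mapsto \phi(nt)$ underlying the PDCP.

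The key steps, in order, are: (1) Identify the correct target system $\{e_n\}\subset L^2(0,1)$ and verify that $\iota$ (hence $V$) is \emph{bounded and injective} — boundedness will follow from a Hilbert-type inequality comparing the $\ell^2_\omega$ norm with the $L^2(0,1)$ norm, and injectivity from the completeness/linear independence of the $e_n$. (2) Prove the crucial \emph{intertwining identity}: $V W_n = D_n V$ for all $n\ge 1$, where $D_n$ is the dilation-by-$n$ operator (extended periodically) — this is where the arithmetic of the $W_n$ (recall $W_m W_n=W_{mn}$, and $W_n h_k=h_{nk}-h_n$) must be matched against the multiplicative action $\phi(x)\mapsto\phi(nx)$; the formula $W_n f(z)=(1+z+\cdots+z^{n-1})f(z^n)$ in \eqref{W_n} is designed exactly so that passing through $\Phi^{-1}$ gives the arithmetic dilation $x(m)\mapsto x(m/n)$ on $\ell^2_\omega$. (3) Check that $V$ maps into the space of functions that, after the obvious affine reparametrisation, are $2$-periodic on $(0,\infty)$ — here the ``$2$-periodic'' constraint matches the fact that in the Báez--Duarte normalisation the relevant periods are the integers $k\ge 2$ with $2$ the smallest, and one should verify the image of $\mathbf 1$ lands on the period-$2$ sawtooth. (4) Conclude: if $f$ is cyclic for $\{W_n\}$, then $\overline{\mathrm{span}}\{W_n f:n\ge1\}=H^2$; applying the bounded map $V$ and using the intertwining from step (2) gives $\overline{\mathrm{span}}\{D_n(Vf):n\ge1\}\supseteq \overline{V H^2}$; finally one shows $\overline{VH^2}=L^2(0,1)$ (density of the range of $V$), so $Vf$ is a PDCP function. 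The last sentence of the theorem — that the embedding is \emph{proper} — would be handled separately by exhibiting a known PDCP function (e.g. one coming from a classical example of Wintner/Beurling) that is not of the form $Vf$ for any cyclic $f$, or more simply by a dimension/regularity count showing $VH^2$ is a proper dense subspace while PDCP functions need not be so regular.

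The main obstacle I expect is step (2) together with the density $\overline{VH^2}=L^2(0,1)$ in step (4): one must pick the target system $\{e_n\}$ (equivalently, the precise form of $V$) so delicately that it simultaneously (a) turns $W_n$ into honest dilation, (b) is bounded, and (c) has dense range — and these pull in different directions, since making $V$ an isometry onto its image would typically make the range a proper \emph{closed} subspace, killing the argument, whereas making the range dense forces $V$ to be non-closed-range and one must still control its norm. Getting an explicit $V$ for which all three hold — rather than merely an abstract existence argument — is the technical heart of the proof, and it is the reason the theorem only claims \emph{existence} of such a $V$ rather than canonicity. A secondary subtlety is bookkeeping the shift between ``$k$-periodic on $(0,\infty)$'' for each generator and ``$2$-periodic'' for the assembled function $\phi$; this is a genuine compatibility check, not automatic, and relies on the relation $W_n h_k = h_{nk}-h_n$ to see that the span of the dilates of a single period-$2$ profile already recaptures all periods.
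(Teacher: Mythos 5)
Your overall skeleton is the right one --- find a bounded injective linear map that intertwines the semigroup on $H^2$ with the dilation semigroup on $L^2(0,1)$ and has dense range, then push a cyclic vector through it --- and that is exactly what the paper's $V$ accomplishes. But the proposal never actually constructs $V$; you acknowledge yourself that producing an explicit map which is simultaneously bounded, injective, intertwining and of dense range is ``the technical heart of the proof,'' and that heart is missing. Your candidate $\iota\circ\Phi^{-1}$, with $\iota$ built from an unspecified system of dilated indicator/sawtooth functions, is not developed far enough to check any of the four required properties, and there is no reason to expect the sawtooth system to give a bounded map out of $\ell^2_\omega$ or to turn the \emph{weighted} composition operators $W_nf(z)=(1+z+\cdots+z^{n-1})f(z^n)$ directly into dilations.

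The paper's construction hinges on three ideas absent from your plan. First, one does not intertwine $W_n$ with dilation in one step: the identity $T_n(I-S)=(I-S)W_n$, where $T_nf(z)=f(z^n)$, converts the weighted semigroup into the plain composition semigroup at the cost of composing with $I-S$ (which is bounded, injective, and has dense range --- precisely the properties you need, obtained for free). Second, the constants form a reducing subspace for $\{T_n\}$, so one must project onto $H^2_0=\{f:f(0)=0\}$; cyclicity survives this projection. Third, the passage to $L^2(0,1)$ is not something you need to engineer: Nikolski's unitary $U:z^k\mapsto\sqrt{2}\sin(\pi kx)$ from $H^2_0$ onto $L^2(0,1)$ carries $T_n$-cyclicity exactly to the PDCP property. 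The map is then $V=UP(I-S)$, and boundedness, dense range, the intertwining $VW_n=D_nV$, and injectivity (since $\ker P=\mathbb{C}$ and $(I-S)^{-1}(\mathbb{C})=\{0\}$) all follow at once. A minor additional point: the properness of the embedding is not part of this theorem's statement --- it is handled separately in the paper by showing the Wintner function $f_1$ is a PDCP function outside the range of $V$ --- so your step for it, while in the right spirit, is addressing a different result.
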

The function $Vf\in L^2(0,1)$ is defined on the whole real line by extending it as an odd $2$-periodic function.

\begin{proof}
Recall that the semigroups $\{W_n:n\geq 1\}$ and $\{T_n:n\geq 1\}$ satisfy the relation
\[
T_n(I-S)=(I-S)W_n \ \ \ \ \forall \ n\geq 1 .
\]
where $I-S$ has dense range in $H^2$ (see \eqref{quasiconjugacy}). It follows that if $\mathrm{span}\{W_n f:n\geq 1\}$ is dense in $H^2$ for some $f\in H^2$, then $\mathrm{span}\{T_n (I-S)f:n\geq 1\}$ must also be dense. 

So $f\mapsto (I-S)f$ maps cyclic vectors for $\{W_n:n\geq 1\}$ to cyclic vectors for $\{T_n:n\geq 1\}$. Let \[
H^2_0:=\{f\in H^2:f(0)=0\}=H^2\ominus\mathbb{C}
\]
 and note that $H^2_0$ is a reducing subspace for $T_n$ since  $T_n\mathbb{C}\subset\mathbb{C}$ and $T_n H^2_0\subset H^2_0$. Denote by $P$ the orthogonal projection of $H^2$ onto $H^2_0$. It follows that if $f$ is a cyclic vector for $\{T_n:n\geq 1\}$ in $H^2$ then $Pf$ is a cyclic vector for $\{T_n:n\geq 1\}$ restricted to $H^2_0$. Therefore 
\[
P(I-S):H^2\to H^2_0
\]
maps cyclic vectors for $\{W_n:n\geq 1\}$ into cyclic vectors for $\{T_n:n\geq 1\}$ restricted to $H^2_0$. Finally there is a unitary operator $U:H^2_0\to L^2(0,1)$ such that $f$ is cyclic for $\{T_n:n\geq 1\}$ in $H^2_0$ if and only if $Uf$ is a PDCP function  (see \cite[page 1707]{Nikolski}). In fact, it is defined by
\begin{equation}\label{U}
U: z^k\longmapsto e_k(x):=\sqrt{2}\sin(\pi kx)
\end{equation}
for each $k\geq 1$, where $(e_k)_{k\geq 1}$ is an orthonormal basis for $L^2(0,1)$. Therefore the operator 
\begin{equation}\label{P(I-S)}
V:=UP(I-S):H^2\to L^2(0,1)
\end{equation}
maps cyclic vectors for $\{W_n:n\geq 1\}$ into PDCP functions. It is injective since $\mathrm{Ker}(P)=\mathbb{C}$ and the inverse image of $\mathbb{C}$ under $I-S$ is $\{0\}$.
\end{proof}

Finally, we show that \emph{not all} PDCP functions belong to the range of $V$ \eqref{P(I-S)}. Wintner \cite{Wintner} showed that for $\mathrm{Re}(s)>1/2$ the function
\[
f_s(x)=\sum_{k\geq 1}k^{-s}\sqrt{2}\sin(\pi kx)
\]
is a PDCP function. We give an independent proof that $f_1$ is a PDCP function and that it does not belong to the range of $V$.

\begin{thm}  $f_1$ is a PDCP function that does not belong to the range of $V$.
\end{thm}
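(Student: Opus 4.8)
The plan is to push everything over to the Hardy side via the unitary $U\colon H^2_0\to L^2(0,1)$, $z^k\mapsto e_k$, from the proof of Theorem \ref{PDCP embedding}. Since $f_1=\sum_{k\ge 1}k^{-1}e_k$ (which lies in $L^2(0,1)$ because $\sum k^{-2}<\infty$), its preimage is
\[
U^{-1}f_1=\sum_{k\ge 1}\frac{z^k}{k}=-\log(1-z)\in H^2_0 .
\]
By the equivalence recalled in the proof of Theorem \ref{PDCP embedding}, $f_1$ is a PDCP function exactly when $-\log(1-z)$ is cyclic for $\{T_n:n\ge 1\}$ in $H^2_0$; and since $V=UP(I-S)$ with $U$ bijective, $f_1\in\mathrm{range}(V)$ exactly when $-\log(1-z)\in\mathrm{range}\bigl(P(I-S)\bigr)$. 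So I would reduce the theorem to these two statements about the single function $-\log(1-z)$.

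For cyclicity, note $T_k(-\log(1-z))=-\log(1-z^k)$, and then quote the computation already performed inside the proof of Lemma \ref{Necessary condition 2}: equation \eqref{eq 2 thm 5} together with $\norm{\phi_n}_{H^2}\to 0$ says precisely that
\[
\sum_{k=1}^{n}\frac{\mu(k)}{k}\,T_k\bigl(-\log(1-z)\bigr)=-\sum_{k=1}^{n}\frac{\mu(k)}{k}\log(1-z^k)\longrightarrow z\quad\text{in }H^2 .
\]
Hence the closed $\{T_n\}$-invariant subspace $\mathcal M$ generated by $-\log(1-z)$ contains $z$. But $z$ is trivially cyclic for $\{T_n\}$ in $H^2_0$, its orbit being $\{z^n:n\ge1\}$, whose span is dense; so invariance of $\mathcal M$ forces $\mathcal M\supseteq\overline{\mathrm{span}}\{z^n:n\ge1\}=H^2_0$, i.e. $\mathcal M=H^2_0$. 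Thus $-\log(1-z)$ is cyclic and $f_1$ is a PDCP function.

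For the range statement, suppose $f_1=Vg$ for some $g\in H^2$, so $P(I-S)g=-\log(1-z)$. Writing $(I-S)g(z)=(1-z)g(z)$ and noting that $P$ merely deletes the constant term $g(0)$, this reads $(1-z)g(z)-g(0)=-\log(1-z)$, which forces
\[
g(z)=\frac{g(0)-\log(1-z)}{1-z}.
\]
Since $-\log(1-z)/(1-z)=\sum_{n\ge1}H(n)z^n$ with $H(n)=\sum_{j\le n}1/j$, the $n$-th Maclaurin coefficient of $g$ equals $g(0)+H(n)$ for $n\ge1$, and as $H(n)\to\infty$ this sequence fails to be square-summable whatever the value of $g(0)$. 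Hence no such $g$ lies in $H^2$, and $f_1\notin\mathrm{range}(V)$.

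I do not expect a genuine obstacle here: the one insight is that the analytic content — the $\{T_n\}$-cyclicity of $-\log(1-z)$ — is already contained in the Prime Number Theorem computation of Lemma \ref{Necessary condition 2}, so nothing new has to be estimated. The only places asking for a little care are the bookkeeping identity $P(I-S)g=(1-z)g-g(0)$ and the (elementary) harmonic-number coefficient computation that kills the putative preimage.
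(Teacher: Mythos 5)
Your proof is correct, and it diverges from the paper's in both halves, most substantially in the second. For cyclicity of $-\log(1-z)$, the paper simply applies $P$ to Theorem \ref{(I-S)hk}: from $P(I-S)h_k=T_kL-T_1L$ (with $L=\log(1-z)$) it gets $P(\mathrm{span}\{(I-S)h_k\})\subset\mathrm{span}\{T_nL\}$, and the left side is dense in $H^2_0$. You instead reach back into the proof of Lemma \ref{Necessary condition 2} and use \eqref{eq 2 thm 5} directly to put $z$ in the closed $\{T_n\}$-invariant subspace generated by $-L$; since $T_n$ is bounded (indeed isometric) this subspace really is invariant, and $z$ is trivially cyclic in $H^2_0$, so your argument is sound --- it re-proves the relevant piece of Theorem \ref{(I-S)hk} rather than quoting it, which costs nothing but is the same underlying mathematics. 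The genuine difference is in the range statement: the paper argues via boundary values, namely that any $P$-preimage of $L$ has the form $\alpha+L$, that every $f\in(I-S)H^2=(1-z)H^2$ has radial limit $f^*(1)=0$ by Theorem \ref{Local Dirichlet membership}, and that $L^*(1)$ does not exist. You instead solve $P(I-S)g=-L$ explicitly for $g$, obtaining $g(z)=(g(0)-\log(1-z))/(1-z)$, and kill it by the harmonic-number coefficient computation $-\log(1-z)/(1-z)=\sum_{n\ge1}H(n)z^n$ with $H(n)\to\infty$. Your route is more elementary and self-contained (no local Dirichlet theory needed for this step), while the paper's is shorter given the machinery it has already set up; both are complete and correct, including the bookkeeping identity $P(I-S)g=(1-z)g-g(0)$.
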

\begin{proof} Let $L(z):=\log(1-z)=-\sum_{k\geq 1}z^k/k$ and note that $U(-L)=f_1$ (see \eqref{U}). Hence it is enough to prove that $L$ is a cyclic vector for $\{T_n:n\geq 1\}$ in $H^2_0$. Note that since $(I-S)h_k(z)=\log(1-z^k)-\log(1-z)-\log k$ we have
	\[
	P(I-S)h_k=T_kL-T_1L
	\]
	and hence 
	\begin{equation}\label{L}
	P(\mathrm{span}\{(I-S)h_k:k\geq 2\})\subset\mathrm{span}\{T_nL:n\geq 1\}.
	\end{equation}
	By Theorem \ref{(I-S)hk} the left side of \eqref{L} is dense in $H^2_0$ and hence $L$ is cyclic. Therefore $f_1$ is a PDCP function. To prove that $f_1$ is not in the range of $V$, we show that $L$ is not in the range $P(I-S)$. The functions mapped onto $L$ by $P$ are of the form $\alpha+L$ for some $\alpha\in\mathbb{C}$. But $\alpha+L$ does not belong to $(I-S)H^2$ because $L^*(1)$ does not exist and $f^*(1)=0$ for all $f\in(I-S)H^2$ (see Theorem \ref{Local Dirichlet membership}).
		\end{proof}
	
\section*{Acknowledgement}
	This work has been partially supported by a FAPESP grant (17/09333-3).
\bibliographystyle{amsplain}

\end{document}